\theoremstyle{plain}
 \newtheorem{thm}{Theorem}[section]
 \newtheorem{cor}{Corollary}[section]
\theoremstyle{definition}
\theoremstyle{remark}
 \newtheorem{rem}{Remark}[section]
 \numberwithin{equation}{section}
\renewcommand{\le}{\leqslant}\renewcommand{\leq}{\leqslant}
\renewcommand{\ge}{\geqslant}\renewcommand{\geq}{\geqslant}
\newcommand{\norm}[2]{\left\|#1\right\|_{#2}}
\newcommand{\andd}{\quad\mbox{\rm and}\quad}
\def\be  {\begin{equation}}
\def\ee  {\end{equation}}
\def \Lip{\mathop{\rm Lip}\nolimits}
\newcommand{\B}{\mathbb B}
\newcommand{\C}{\mathbb C}
\newcommand{\W}{\mathbb W}
\newcommand{\R}{\mathbb R}
\newcommand{\N}{\mathbb N}
\renewcommand{\a}{\alpha}
\renewcommand{\b}{\beta}
\newcommand{\ineq}[1]{(\ref{#1})}
\newcommand{\ie}{{\em i.e., }}
\newcommand{\st}{\;\; \big| \;\;}
\renewcommand{\L}{\mathbb{L}}
\newcommand{\Lp}{\L_p}
\newcommand{\tL}{\mathbb{\widetilde L}}
\newcommand{\tLp}{\tL_p}
 \newcommand{\tCon}{\widetilde \C}
\newcommand{\tE}{\widetilde E}
\newcommand{\tD}{\widetilde D}
\newcommand{\tI}{\widetilde I}
\newcommand{\tC}{\widetilde C}
\newcommand{\tW}{\widetilde \W}
\newcommand{\tLip}{\widetilde \Lip}
\newcommand{\wab}{w_{\a,\b}}
\newcommand{\ds}{\displaystyle}
  \newcommand{\AC}{\mathrm{AC}}
  \newcommand{\loc}{\mathrm{loc}}
 \newcommand{\ec}{\end{comment}}
\newcommand{\bc}{ \begin{comment} }
\newenvironment{comment}[2]
{\bgroup\vspace{7pt}
\begin{tabular}{|p{5in}|}
\hline \qquad \bf \footnotesize Comment -- to be deleted in the final version \\
\hline
\quad\sl\footnotesize #1#2} {\\ \hline \end{tabular}
\vspace{7pt}\indent\egroup}
\newtheorem{theoremdir}{Theorem}
\newtheorem{theoreminv}{Theorem}
\newtheorem{theoremcon}{Theorem}
\newtheorem{theoremdirr}{Theorem}
\newtheorem{theoreminvr}{Theorem}
\newtheorem{theoremconr}{Theorem}
\newtheorem{theoremdira}{Theorem}
\newtheorem{theoreminva}{Theorem}
\newtheorem{theoremcona}{Theorem}
\newtheorem{theoremdirar}{Theorem}
\newtheorem{theoreminvar}{Theorem}
\newtheorem{theoremconar}{Theorem}
\title[New moduli of smoothness]{NEW MODULI OF SMOOTHNESS}
\subjclass[1991]{Primary 41A17; Secondary 41A10, 42A10, 41A25, 41A27}
\author[Kopotun]{\bfseries K. A. Kopotun}
\address{
Department of Mathematics \\
University of Manitoba   \\
Winnipeg, Manitoba R3T 2N2\\
Canada }
\email{kopotunk@cc.umanitoba.ca}
\thanks{The first author acknowledges  support of NSERC of Canada.}
\author[Leviatan]{\bfseries D. Leviatan}
\address{
Raymond and Beverly Sackler School of Mathematical Sciences\\
 Tel Aviv University\\
 Tel Aviv 69978\\
 Israel }
\email{leviatan@post.tau.ac.il}
\author[Shevchuk]{\bfseries I. A. Shevchuk}
\address{
 Faculty of Mechanics and Mathematics\\
 Taras Shevchenko National University of Kyiv\\
  Kyiv 01601\\
 Ukraine}
\email{shevchuk@univ.kiev.ua}
\begin{document}

\vspace{18mm} \setcounter{page}{1} \thispagestyle{empty}

\begin{abstract}
In this paper, we discuss various properties of the new modulus of smoothness
\[
\omega^\varphi_{k,r}(f^{(r)},t)_p := \sup_{0 < h\leq t}\|\mathcal W^r_{kh}(\cdot)
\Delta_{h\varphi(\cdot)}^k (f^{(r)},\cdot)\|_{\Lp[-1,1]},
\]
where $\varphi(x):=\sqrt{1-x^2}$ and
$\ds
\mathcal W_\delta(x) = \bigl((1-x-\delta\varphi(x)/2)
(1+x-\delta\varphi(x)/2)\bigr)^{1/2}.
$

Related moduli with more general weights are also considered.
\end{abstract}

\maketitle

\section{Introduction}

\subsection{Trigonometric approximation}

Let $\tLp$, $1\le p\le\infty$, denote the space of $2\pi$-periodic measurable functions for which  the norm
$$
\|f\|_{\tLp}:=\left( \int_{-\pi}^\pi |f(x)|^p dx \right)^{1/p}
$$
is finite. Here, by $\tL_\infty$ we mean the space of continuous $2\pi$-periodic functions   $\tCon$ equipped with the uniform norm, \ie
$$
\|f\|_{\tCon}:=\max_{x\in[-\pi,\pi]}|f(x)|.
$$

Let $\mathcal T_n$, $n\in \mathbb{N}$, be the space of $(n-1)$st degree trigonometric
polynomials
$$
T_n(x)= \sum_{j=0}^{n-1}(a_j\cos jx+b_j\sin jx).
$$
For $f\in\tLp$, denote by
\be\label{delta}
\Delta_h^k(f,x)= \sum_{i=0}^k  {\binom ki} (-1)^{k-i} f(x+(i-k/2)h)
\ee
the $k$th symmetric difference of the function $f$, and by
$$
\omega_k(f,t)_p:=\inf_{h\in[0,t]}\|{\Delta}_h^k(f,\cdot)\|_{\tLp}
$$
its $k$th modulus of smoothness. Finally, let
$$
\tE_n(f)_p:=\inf_{T_n\in\mathcal T_n}\|f-T_n\|_{\tLp},
$$
denote the degree of approximation of $f$ by trigonometric polynomials from $\mathcal T_n$.

In 1908, de la Vall{\'e}e Poussin (see \cite{dlvp-1925}*{Section 7}, for example) posed a problem on a connection between the rate of polynomial approximation of functions and their differential properties. To quote
de la Vall{\'e}e Poussin \cite{dlvp-1925}*{p. 119}, ``It is the memoir by D. Jackson \cite{jack} which answers most completely the direct question, and that of S. Bernstein \cite{bern} which answers most completely the inverse problem''. These results were   generalized by  de la Vall{\'e}e Poussin  in \cite{dlvp-1919}, though as he writes in \cite{dlvp-1925}*{p. 119}, ``I combined the results obtained by the two authors above
named, and filled them out in many points; I changed or
simplified the proofs; but I contributed little in the way of
new materials to the construction''.

In 1911, D. Jackson \cite{jack}*{Theorem VIII} (see also \cite{jack-1921}*{p. 428})  proved the following inequality (which is now commonly known as one of ``Jackson's inequalities''):
$$
\tE_n(f)_\infty \le c\omega_1(f,n^{-1})_\infty , \quad n\ge 1.
$$
This result  was later   extended by Zygmund \cite{zyg}*{Theorems $8$ and $8'$}, 
Bernstein \cite{ber}, 
Akhiezer \cite{ach}*{Section 89}, 
and Stechkin \cite{ste}*{Theorem 1} 
as follows.

\begin{theoremdir}[{\bf D}irect theorem, $r=0$]
 Let $k\in\N$. If $f\in\tLp$, $1\leq p \leq \infty$,  then
$$
\tE_n(f)_p\le c(k)\omega_k(f,n^{-1})_p, \quad n\ge 1.
$$
\end{theoremdir}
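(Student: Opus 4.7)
My plan is to prove this classical Jackson-type inequality by exhibiting an explicit near-best approximant $T_n$ of $f$ of the form $T_n(f,x) = \sum_{i=1}^k \binom{k}{i}(-1)^{i+1} \int_{-\pi}^\pi J_n(u) f(x+iu)\,du$, where $J_n$ is a nonnegative Jackson-type kernel. The construction is driven by the elementary identity
$$
f(x) = (-1)^k \Delta_u^k(f,x) + \sum_{i=1}^k \binom{k}{i}(-1)^{i+1} f(x+iu),
$$
which follows from $\sum_{i=0}^k \binom{k}{i}(-1)^{k-i} = 0$. Here $\Delta_u^k$ denotes the forward $k$-th difference, whose $\tLp$-norm coincides with that of the symmetric difference used in the paper by $2\pi$-periodicity. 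Multiplying this identity by $J_n(u)$, integrating in $u$, and using $\int J_n = 1$ immediately produces the key error representation
$$
f(x) - T_n(f,x) = (-1)^k \int_{-\pi}^\pi J_n(u)\,\Delta_u^k(f,x)\,du.
$$

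For the kernel I fix an integer $s \ge k+1$, set $m := \lfloor n/s \rfloor + 1$, and take
$$
J_n(u) := \gamma_n \left(\frac{\sin(mu/2)}{\sin(u/2)}\right)^{2s}, \qquad \int_{-\pi}^\pi J_n(u)\,du = 1.
$$
Then $J_n \ge 0$ is a trigonometric polynomial in $u$ of degree at most $s(m-1) \le n$. Splitting the moment integral at $|u| \sim 1/m$ and using $|\sin(mu/2)|/|\sin(u/2)| \le \min(m, \pi/|u|)$ one verifies that
$$
\int_{-\pi}^\pi J_n(u)\,|u|^{j}\,du \le c(k)\,n^{-j}, \qquad j = 0,1,\ldots,k.
$$
A short Fourier-side computation (each convolution $\int J_n(u)f(x+iu)\,du$ concentrates the frequency $\ell$ of $f$ on $|i\ell| \le s(m-1)$) then confirms $T_n \in \mathcal T_N$ with $N \le sm \le n+s$.

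Finally, Minkowski's integral inequality together with the standard facts $\|\Delta_u^k(f,\cdot)\|_{\tLp} \le \omega_k(f,|u|)_p$ and $\omega_k(f,\lambda t)_p \le (1+\lambda)^k \omega_k(f,t)_p$ gives
$$
\|f - T_n\|_{\tLp} \le \omega_k(f,1/n)_p \int_{-\pi}^\pi J_n(u)(1+n|u|)^k\,du \le c(k)\,\omega_k(f,1/n)_p,
$$
after binomial expansion and insertion of the moment bounds. This yields $\tE_{n+s}(f)_p \le c(k)\,\omega_k(f,1/n)_p$, and a change of index (with small $n$ absorbed into $c(k)$) completes the proof for all $n\ge 1$. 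The main obstacle lies in the kernel construction: one needs a single family $\{J_n\}$ whose degree grows linearly in $n$ while simultaneously every moment of order up to $k$ decays like $n^{-j}$; the power trick with exponent $2s \ge 2k+2$ meets both requirements, but the moment verification hinges on the careful split described above.
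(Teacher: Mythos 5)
Your argument is correct: the identity $f(x)=(-1)^k\Delta_u^k(f,x)+\sum_{i=1}^k\binom{k}{i}(-1)^{i+1}f(x+iu)$, the generalized Jackson kernel with exponent $2s\ge 2k+2$ (which guarantees the moment bounds $\int J_n|u|^j\,du\le c(k)n^{-j}$ for $j\le k\le 2s-2$), and the final Minkowski/doubling step assemble into a complete proof, and passing from the forward to the symmetric difference by periodicity is legitimate. The paper states Theorem $\mathbf{\tD_0}$ without proof, as a classical result of Jackson, Zygmund, Bernstein, Akhiezer and Stechkin; your construction is precisely the standard Jackson--Stechkin argument found in those references, so there is nothing to reconcile beyond noting that the paper's displayed definition of $\omega_k(f,t)_p$ with an infimum is an evident typo for the supremum you (correctly) use.
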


We note that    ``$r=0$''  and the subscript ``$0$'' in ``$\mathrm{\tD_0}$'' will become clear once one compares this result with Theorem~$\mathrm{\tD_r}$  below.


Matching inverse theorems are due to Bernstein \cite{bern},
de la Vall{\'e}e Poussin  \cite{dlvp-1919}*{Section 39}, 
Quade \cite{quade}*{Theorem 1}, 
Salem \cite{salem}*{Chapter V}, 
Zygmund \cite{zyg}*{Theorems $8$, $8'$, $9$ and $9'$},
the  Timan brothers  \cite{timantiman},
 and  Stechkin \cite{ste}*{Theorem 8}. 

\begin{thm}
 Let $k\in\N$  and  $f\in\tLp$, $1\leq p\leq \infty$. Then
\[
\omega_k(f,n^{-1})_p \le\frac{c(k)}{n^k}\sum_{\nu=1}^n \nu^{k-1} \tE_{\nu}(f)_p , \quad n\geq 1.
\]
\end{thm}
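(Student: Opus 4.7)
The plan is to use the classical dyadic decomposition of $f$ by near-best approximants, combined with Bernstein's inequality for trigonometric polynomials. For each $j\ge 0$ pick $T_{2^j}\in\mathcal T_{2^j}$ with $\|f-T_{2^j}\|_{\tLp}\le 2\tE_{2^j}(f)_p$. Given $n\ge 1$, let $m$ be the integer with $2^m\le n<2^{m+1}$, and write
$$
T_{2^{m+1}} = T_1+\sum_{j=1}^{m+1}(T_{2^j}-T_{2^{j-1}}).
$$
By subadditivity of $\omega_k$ and the trivial bound $\omega_k(g,t)_p\le 2^k\|g\|_{\tLp}$,
$$
\omega_k(f,1/n)_p \le 2^k\|f-T_{2^{m+1}}\|_{\tLp}+\omega_k(T_{2^{m+1}},1/n)_p \le c\,\tE_n(f)_p+\omega_k(T_{2^{m+1}},1/n)_p,
$$
where I used the monotonicity of $\nu\mapsto\tE_\nu(f)_p$ and $2^{m+1}>n$.

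For the polynomial term I would use the elementary estimate $\omega_k(T,t)_p\le t^k\|T^{(k)}\|_{\tLp}$ (obtained by expressing $\Delta_h^k(T,\cdot)$ as a $k$-fold iterated integral of $T^{(k)}$) together with Bernstein's inequality $\|Q^{(k)}\|_{\tLp}\le cN^k\|Q\|_{\tLp}$ for $Q\in\mathcal T_N$. Since $T_1$ is a constant, $T_1^{(k)}\equiv 0$, and
$$
\|T_{2^j}-T_{2^{j-1}}\|_{\tLp}\le \|f-T_{2^j}\|_{\tLp}+\|f-T_{2^{j-1}}\|_{\tLp}\le 6\,\tE_{2^{j-1}}(f)_p.
$$
Applying Bernstein term-by-term,
$$
\omega_k(T_{2^{m+1}},1/n)_p \le n^{-k}\sum_{j=1}^{m+1}\|(T_{2^j}-T_{2^{j-1}})^{(k)}\|_{\tLp} \le c\,n^{-k}\sum_{j=1}^{m+1}2^{jk}\tE_{2^{j-1}}(f)_p.
$$

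The last step is to convert the dyadic sum into $\sum_{\nu=1}^n\nu^{k-1}\tE_\nu(f)_p$. Since $\nu\mapsto\tE_\nu(f)_p$ is nonincreasing,
$$
2^{ik}\tE_{2^i}(f)_p \le c\sum_{\nu=2^{i-1}+1}^{2^i}\nu^{k-1}\tE_\nu(f)_p,\qquad i\ge 1,
$$
because the right-hand sum contains $2^{i-1}$ terms, each at least $2^{(i-1)(k-1)}\tE_{2^i}(f)_p$. Summing over $i$, bounding the $i=0$ contribution $\tE_1(f)_p$ by the $\nu=1$ term, and absorbing $\tE_n(f)_p$ into $c\,n^{-k}\sum_{\nu=1}^n\nu^{k-1}\tE_\nu(f)_p$ (which follows from the same monotonicity, since $\sum_{\nu=1}^n\nu^{k-1}\ge cn^k$ and $\tE_\nu(f)_p\ge\tE_n(f)_p$), delivers the claimed inequality. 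The whole argument is essentially bookkeeping on top of the two crucial polynomial facts, Bernstein's inequality and $\omega_k(T,t)_p\le t^k\|T^{(k)}\|_{\tLp}$; there is no genuine obstacle, only care with the dyadic-to-consecutive summation.
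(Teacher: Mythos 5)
Your argument is correct and complete: it is the classical Bernstein--Stechkin proof (telescoping dyadic decomposition by near-best approximants, Bernstein's inequality in $\tLp$, the bound $\omega_k(T,t)_p\le t^k\|T^{(k)}\|_{\tLp}$, and the dyadic-to-consecutive resummation, all of which you execute correctly). The paper does not prove this theorem at all --- it only states it with citations to Bernstein, Quade, Zygmund, Stechkin, et al. --- and your proof is precisely the standard one found in those sources; the only cosmetic point is that the paper's definition of $\omega_k$ with an $\inf$ over $h$ is an evident typo for $\sup$, which is what your subadditivity steps implicitly (and correctly) assume.
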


This theorem can be restated in the following form.

\begin{theoreminv}[{\bf I}nverse theorem, $r=0$]
 Let $k\in\N$  and let
$\phi:[0,1]\mapsto[0,\infty)$ be a nondecreasing
function such that $\phi(0+)=0$.
If a function $f\in\tLp$, $1\leq p\leq \infty$, is such that
\[
\tE_n(f)_p\le \phi\left(n^{-1}\right), \quad n\ge 1,
\]
then
\[
\omega_k(f,t)_p\le c(k)t^k  \int_t^1\frac{\phi(u)}{u^{k+1}}du      ,\quad
0<t\le1/2.
\]
\end{theoreminv}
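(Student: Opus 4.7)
My plan is to derive Theorem $\mathbf{\tI_0}$ as a direct corollary of Theorem 1.2 (the sum-form inverse theorem just stated) by substituting the hypothesis $\tE_\nu(f)_p \le \phi(1/\nu)$ into the sum and converting it to the advertised integral via the monotonicity of $\phi$.

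Given $t \in (0,1/2]$, I would set $n := \lfloor 1/t \rfloor \ge 2$, so that $t \le 1/n \le 2t$. The monotonicity of $\omega_k(f,\cdot)_p$ in its second argument gives $\omega_k(f,t)_p \le \omega_k(f,1/n)_p$, and Theorem 1.2 combined with the hypothesis yields
\[
\omega_k(f,t)_p \le \frac{c(k)}{n^k}\sum_{\nu=1}^n \nu^{k-1}\phi(1/\nu).
\]

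The crux of the proof is the sum-to-integral comparison. For each $\nu \ge 2$, the monotonicity of $\phi$ and of $u \mapsto u^{-k-1}$ on the interval $[1/\nu, 1/(\nu-1)]$ (of length $1/[\nu(\nu-1)]$) gives
\[
\int_{1/\nu}^{1/(\nu-1)}\frac{\phi(u)}{u^{k+1}}\,du \ge \frac{\phi(1/\nu)\,(\nu-1)^{k+1}}{\nu(\nu-1)} \ge c(k)\,\nu^{k-1}\phi(1/\nu),
\]
and telescoping from $\nu = 2$ to $\nu = n$ majorizes the tail of the sum by $c(k) \int_{1/n}^{1}\phi(u)u^{-k-1}\,du$. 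Substituting $1/n^k \le 2^k t^k$ and $\int_{1/n}^1 \le \int_t^1$ converts this into the asserted bound.

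The only obstacle I anticipate is the boundary summand at $\nu = 1$, which contributes an extra term of the form $c(k)\,t^k\phi(1)$. I would dispose of it either by extending $\phi$ past $u=1$ by its value $\phi(1)$ and carrying out the sum-to-integral comparison on an additional interval like $[1,2]$, or by a direct check that for $t \le 1/2$ this boundary term is dominated by $c(k)\,t^k\int_t^1\phi(u)u^{-k-1}\,du$ with a constant depending only on $k$. This is the sole technical wrinkle; once addressed, the proof is complete.
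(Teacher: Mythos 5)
Your route is exactly the one the paper intends: the paper gives no proof of Theorem $\mathbf{\tI_0}$ at all, merely announcing that it is a restatement of the preceding sum-form inverse theorem (Theorem 1.1), and your derivation --- take $n=\lfloor 1/t\rfloor$, insert $\tE_\nu(f)_p\le\phi(1/\nu)$ into the sum, and minorize $\int_{1/\nu}^{1/(\nu-1)}\phi(u)u^{-k-1}\,du$ by $c(k)\,\nu^{k-1}\phi(1/\nu)$ using the monotonicity of $\phi$ --- is the standard way to make that restatement precise. The main body of your computation is correct.

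The one point where you are too optimistic is the $\nu=1$ summand, and your second proposed fix is in fact false for the class of $\phi$ admitted by the theorem: there is no bound of the form $\phi(1)\le c(k)\int_t^1\phi(u)u^{-k-1}\,du$ for a general nondecreasing $\phi$ with $\phi(0+)=0$. Concretely, let $\phi\equiv 0$ on $[0,1)$ and $\phi(1):=\tE_1(\cos)_p>0$. The hypothesis $\tE_n(f)_p\le\phi(1/n)$, $n\ge1$, is then satisfied by $f=\cos$ (which lies in $\mathcal T_2$, so $\tE_n(f)_p=0$ for $n\ge2$), while the right-hand side of the asserted conclusion vanishes and $\omega_k(\cos,t)_p>0$. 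So the leftover term $c(k)t^k\phi(1)$ (equivalently $c(k)t^k\tE_1(f)_p$) genuinely cannot be absorbed into the integral; it can be absorbed only under an extra regularity assumption such as $\phi(1)\le c\,\phi(1/2)$, which does hold in the only case the paper ever uses ($\phi(u)=u^\alpha$), and otherwise must be kept as an explicit additional summand --- compare the term $c(N,k,r)t^kE_{k+r}(f)_p$ that the authors are careful to retain in the algebraic Theorem $\mathrm{I_r}$. Your first fix (extending $\phi$ to $[1,2]$) does not rescue the statement either, since it replaces $\int_t^1$ by $\int_t^2$. This is a defect of the folklore formulation rather than of your argument, but a complete write-up must either add the $t^k\phi(1)$ term to the conclusion or record the extra hypothesis on $\phi$; it cannot be dismissed as a routine check.
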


These direct and inverse theorems yield a constructive characterization of the class
$\tLip(\alpha,p) = \left\{ f\in\tLp \st \omega_{\lfloor \alpha \rfloor +1} (f,t)_p \leq c t^\alpha\right\}$.

 \begin{theoremcon}[{\bf C}onstructive characterization, $r=0$]
   Let $f\in\tLp$, $1\leq p \leq \infty$, and $\alpha > 0$. 
    If
$$
\omega_k(f,t)_p\le t^\alpha,
$$
then
$$
\tE_n(f)_p\le c(k) n^{-\alpha},\quad n\ge 1.
$$
Conversely, if $0<\alpha<k$ and
$$
\tE_n(f)_p\le  n^{-\alpha},\quad n\ge 1,
$$
then
$$
\omega_k(f,t)_p\le c(k,\alpha)t^\alpha.
$$
\end{theoremcon}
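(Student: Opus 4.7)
The plan is to deduce both directions directly from the two preceding theorems, $\tilde{D}_0$ and $\tilde{I}_0$, so essentially no new ideas are needed; only a bookkeeping argument.

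For the direct half, I would simply invoke Theorem~$\tilde{D}_0$ at the value $n \ge 1$: it gives $\tilde E_n(f)_p \le c(k)\omega_k(f,n^{-1})_p$. Under the hypothesis $\omega_k(f,t)_p\le t^\alpha$, substituting $t = 1/n$ yields $\tilde E_n(f)_p \le c(k) n^{-\alpha}$ immediately, with the same constant $c(k)$.

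For the converse (inverse) half, I would apply Theorem~$\tilde{I}_0$ with $\phi(u) := u^\alpha$, which is nondecreasing and satisfies $\phi(0+)=0$. Its hypothesis $\tilde E_n(f)_p \le \phi(1/n) = n^{-\alpha}$ is exactly what is assumed, so the conclusion reads
\[
\omega_k(f,t)_p \le c(k)\, t^k \int_t^1 \frac{u^\alpha}{u^{k+1}}\,du = c(k)\, t^k \int_t^1 u^{\alpha-k-1}\,du,
\qquad 0<t\le 1/2.
\]
The condition $0<\alpha<k$ is used here precisely to guarantee that $\alpha-k-1<-1$ so the integral evaluates to
\[
\int_t^1 u^{\alpha-k-1}\,du = \frac{t^{\alpha-k}-1}{k-\alpha} \le \frac{t^{\alpha-k}}{k-\alpha}.
\]
Multiplying by $t^k$ gives $\omega_k(f,t)_p \le \frac{c(k)}{k-\alpha}\, t^\alpha =: c(k,\alpha)\, t^\alpha$ for $0<t\le 1/2$. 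For $1/2 < t \le 1$, the modulus is trivially bounded by a constant times $\|f\|_{\tLp}$ (which is itself controlled via $\tilde E_1(f)_p$ and the approximating constant), and this can be absorbed into $c(k,\alpha) t^\alpha$ since $t^\alpha$ is bounded away from $0$ on this interval.

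There is no real obstacle here; the only thing to be careful about is the use of the restriction $\alpha<k$, without which the integral in Theorem~$\tilde{I}_0$ diverges (or gives only a logarithmic improvement at $\alpha=k$), explaining why the classical characterization fails at $\alpha\ge k$ and motivating the passage from $\omega_k$ to higher orders of smoothness.
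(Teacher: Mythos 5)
Your proposal is correct and matches the paper's own treatment: the paper offers no separate argument for Theorem~$\mathrm{\tC_0}$, presenting it precisely as the combination of Theorem~$\mathrm{\tD_0}$ (for the direct half) and Theorem~$\mathrm{\tI_0}$ with $\phi(u)=u^\alpha$ (for the converse half), exactly as you do. Your explicit evaluation of the integral, the observation that $\alpha<k$ is what makes it converge to $O(t^{\alpha-k})$, and the remark on handling $t>1/2$ are all sound.
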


 Jackson's inequalities of the second type involve differentiable functions.

Let $\tW^r_p$, $r\in\mathbb N$, be the space of $2\pi$-periodic
functions $f$ such that $f^{(r-1)}$ is absolutely continuous  and $f^{(r)}\in\tLp$,
where by $\tW^r_\infty$ we mean $\tCon^r$.

The following result is an immediate consequence of Theorem~$\mathrm{\tD_0}$ and the well known property
\[
\omega_{k+r}(f ,t)_p \leq t^r  \omega_k(f^{(r)},t)_p, \quad t>0 .
\]

\begin{theoremdirr}[Direct theorem, $r\in\N$]
    Let $k\in\N$ and $r\in\N$. If $f\in\tW^r_p$, then
$$
\tE_n(f)_p\le c(k,r) n^{-r} \omega_k(f^{(r)},n^{-1})_p,\quad
n\ge 1.
$$
\end{theoremdirr}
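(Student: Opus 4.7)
The plan is to invoke Theorem~$\mathrm{\tD_0}$ for the smoothness index $k+r$ instead of $k$, and then to trade those $r$ extra differences for a factor $n^{-r}$ using the cited well known inequality
\[
\omega_{k+r}(f,t)_p \leq t^r\, \omega_k(f^{(r)},t)_p,\qquad t>0.
\]

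First I would note that $f\in\tW^r_p$ in particular implies $f\in\tLp$, so Theorem~$\mathrm{\tD_0}$ applies and gives
\[
\tE_n(f)_p \le c(k+r)\,\omega_{k+r}(f,n^{-1})_p,\qquad n\ge 1.
\]
Then I would substitute $t=n^{-1}$ in the displayed inequality above to obtain
\[
\omega_{k+r}(f,n^{-1})_p \le n^{-r}\,\omega_k(f^{(r)},n^{-1})_p.
\]
Combining the two estimates and absorbing $c(k+r)$ into a new constant $c(k,r)$ yields the claim.

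The only non-trivial ingredient is the auxiliary inequality relating $\omega_{k+r}(f,\cdot)_p$ and $\omega_k(f^{(r)},\cdot)_p$; since the excerpt labels it as ``well known,'' I would simply cite it rather than reproduce its proof. Conceptually, it follows from the identity $\Delta_h^{k+r}(f,x) = \int \cdots \int \Delta_h^k(f^{(r)}, x+u_1+\cdots+u_r)\,du_1\cdots du_r$ over an appropriate product of intervals of length $h$, after which one takes the $\tLp$-norm, applies Minkowski's integral inequality, and bounds the inner norm by $\omega_k(f^{(r)},h)_p$. No further obstacle arises, so the proof is essentially a one-line consequence of Theorem~$\mathrm{\tD_0}$.
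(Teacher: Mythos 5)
Your argument is correct and is exactly the route the paper itself indicates: it states that Theorem~$\mathrm{\tD_r}$ is an immediate consequence of Theorem~$\mathrm{\tD_0}$ (applied with smoothness index $k+r$) together with the well known inequality $\omega_{k+r}(f,t)_p \leq t^r\,\omega_k(f^{(r)},t)_p$. Your sketch of why that auxiliary inequality holds is a reasonable bonus but not required, since the paper also just cites it as well known.
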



The following inverse theorems are due to
Bernstein \cite{bern}, 
 de la Vall{\'e}e Poussin  \cite{dlvp-1919}*{Section 39}, 
 Quade \cite{quade}*{Theorem 1}, 
  Zygmund \cite{zyg}*{Theorems $8$, $8'$, $9$ and $9'$},
 Stechkin \cite{ste}*{Theorem 11}, 
 and
A. Timan \cite{ti-diss}, \cite{ti-book}*{Theorem 6.1.3}.

\begin{thm}
 Let   $r\in\N$  and  $f\in\tLp$, $1\leq p\leq \infty$. If
 \[
 \sum_{\nu=1}^\infty \nu^{r-1} \tE_\nu(f)_p < \infty ,
 \]
 then $f$ is a.e. identical with a function from $\tW^r_p$.
In addition, for any $k\in\N$,
\[
\omega_k(f^{(r)},n^{-1})_p \le\frac{c(k,r)}{n^k} \sum_{\nu=1}^n \nu^{k+r-1}\tE_{\nu}(f)_p  + c(k,r)  \sum_{\nu=n+1}^\infty \nu^{r-1} \tE_{\nu}(f)_p , \quad n\geq 1.
\]
\end{thm}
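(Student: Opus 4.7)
The plan is to follow the classical three-step strategy: first construct $f^{(r)}$ via Bernstein's inequality and a dyadic telescoping of best polynomial approximants, next estimate $\tE_n(f^{(r)})_p$ in terms of the sequence $\{\tE_\nu(f)_p\}$, and finally feed the result into the $r=0$ inverse estimate for the ordinary modulus (Theorem~1.1 above). For the first step, let $T_n^*\in\mathcal{T}_n$ denote a best trigonometric approximant of $f$ in $\tLp$. Bernstein's inequality $\|Q^{(r)}\|_{\tLp}\le n^r\|Q\|_{\tLp}$ for $Q\in\mathcal{T}_n$, applied to $Q=T_{2^{j+1}}^*-T_{2^j}^*$, gives
\[
\|(T_{2^{j+1}}^*)^{(r)}-(T_{2^j}^*)^{(r)}\|_{\tLp} \le 2\cdot 2^{(j+1)r}\,\tE_{2^j}(f)_p.
\]
Since $\tE_\nu(f)_p$ is nonincreasing, convergence of $\sum_\nu\nu^{r-1}\tE_\nu(f)_p$ is equivalent, up to constants depending on $r$, to convergence of $\sum_j 2^{jr}\tE_{2^j}(f)_p$; therefore $((T_{2^j}^*)^{(r)})_{j\ge 0}$ is Cauchy in $\tLp$. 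A standard integration argument identifies its limit with $f^{(r)}$, placing $f$ in $\tW^r_p$.

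Telescoping starting from level $n$ and applying the same Bernstein estimate block-by-block yields
\[
\tE_n(f^{(r)})_p \le c\sum_{j=0}^\infty(2^j n)^r\tE_{2^j n}(f)_p \le c\Bigl(n^r\tE_n(f)_p+\sum_{\nu>n}\nu^{r-1}\tE_\nu(f)_p\Bigr),
\]
where the second inequality uses the monotonicity of $\tE_\nu(f)_p$. Plugging this into the $r=0$ inverse estimate
\[
\omega_k(f^{(r)},n^{-1})_p \le \frac{c(k)}{n^k}\sum_{\nu=1}^n\nu^{k-1}\tE_\nu(f^{(r)})_p
\]
(applied to $f^{(r)}\in\tLp$) and interchanging the order of summation, terms with $\mu\le n$ accumulate a factor $\sum_{\nu=1}^\mu\nu^{k-1}\sim\mu^k$ and contribute to the first sum of the claim via $\mu^{k+r-1}\tE_\mu(f)_p$, while terms with $\mu>n$ accumulate $\sum_{\nu=1}^n\nu^{k-1}\sim n^k$ and contribute to the tail via $\mu^{r-1}\tE_\mu(f)_p$. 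The leftover boundary term $n^r\tE_n(f)_p/n^k$ is absorbed into the first sum since $n^{r-k}\tE_n(f)_p\le n^{r-1}\tE_n(f)_p\le n^{-k}\sum_{\nu=1}^n\nu^{k+r-1}\tE_\nu(f)_p$.

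The main obstacle is essentially bookkeeping: careful accounting of the boundary at $\mu\sim n$ and tracking of constants through the Bernstein/dyadic comparisons. The substance of the argument is already in the first step --- the Bernstein-plus-dyadic trick that simultaneously manufactures $f^{(r)}$ and controls its degree of approximation by trigonometric polynomials.
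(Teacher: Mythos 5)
The paper does not actually prove this theorem: it is quoted as a classical result and attributed to Bernstein, de la Vall\'ee Poussin, Quade, Zygmund, Stechkin, the Timans, and A.~Timan, so there is no in-paper argument to compare against. Your proof is the standard classical one (Stechkin/Timan style) and is correct: the Bernstein-plus-dyadic telescoping both produces $f^{(r)}\in\tLp$ and yields $\tE_n(f^{(r)})_p\le c\bigl(n^r\tE_n(f)_p+\sum_{\nu>n}\nu^{r-1}\tE_\nu(f)_p\bigr)$, and substituting this into the $r=0$ inverse estimate with the interchange of summation you describe gives exactly the claimed two-term bound; the only point worth writing out in full detail is the identification of the $\tLp$-limit of $(T_{2^j}^*)^{(r)}$ with $f^{(r)}$ a.e.\ (closedness of differentiation under $\tLp$-convergence of both the polynomials and their derivatives), which is indeed standard.
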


This theorem can be restated as follows.

\begin{theoreminvr}[Inverse theorem, $r\in\N$]
 Let $k\in\N$, $r\in\N$ and $\phi:[0,1]\mapsto[0,\infty)$ be a non
decreasing function such that $\phi(0+)=0$ and
$$
\int_0^1\frac{\phi(t)}{t^{r+1}}dt<\infty.
$$
If $f\in\tLp$ be such that
$$
\tE_n(f)_p\le\phi\left( n^{-1} \right), \quad n\ge 1,
$$
then $f$ is a.e. identical with a function from $\tW^r_p$,   and
\[
\omega_k(f^{(r)},t)_p\le c(k,r)\left(\int_0^t\frac{\phi(u)}{u^{r+1}}du+
t^k\int_t^1\frac{\phi(u)}{u^{k+r+1}}du\right),\quad
0<t\le1/2.
\]
\end{theoreminvr}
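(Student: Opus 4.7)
The plan is to deduce Theorem~$\mathrm{\tI_r}$ directly from the preceding Stechkin-type theorem (the unnumbered Theorem with the sum bound on $\omega_k(f^{(r)},n^{-1})_p$). Given the hypothesis $\tE_n(f)_p \leq \phi(1/n)$, everything reduces to standard sum--integral comparisons that exploit only the monotonicity of $\phi$; the argument splits into (a) verifying the summability premise $\sum_\nu \nu^{r-1}\tE_\nu(f)_p < \infty$ so that the preceding theorem applies, and (b) converting the two sums on its right-hand side into the two integrals that appear in the statement.

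For (a), for each $\nu \geq 2$ the mean value theorem gives
\[
\int_{1/\nu}^{1/(\nu-1)} u^{-r-1}\,du = \frac{\nu^r - (\nu-1)^r}{r} \geq c_r\, \nu^{r-1},
\]
and the monotonicity of $\phi$ yields $\phi(u) \geq \phi(1/\nu)$ on $[1/\nu,1/(\nu-1)]$. Multiplying and summing (the intervals $[1/\nu,1/(\nu-1)]$, $\nu \geq 2$, tile $(0,1]$) gives
\[
\sum_{\nu=2}^\infty \nu^{r-1}\phi(1/\nu) \leq c_r \int_0^1 \frac{\phi(u)}{u^{r+1}}\,du < \infty.
\]
Adding the finite term $\phi(1)$ at $\nu=1$ produces $\sum_{\nu\geq 1} \nu^{r-1}\tE_\nu(f)_p < \infty$, so the preceding theorem applies and $f$ is a.e.\ identical with a function from $\tW^r_p$.

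For (b), given $0 < t \leq 1/2$, I would set $n := \lfloor 1/t\rfloor \geq 2$ so that $t \leq 1/n \leq 2t$, and estimate $\omega_k(f^{(r)},t)_p \leq \omega_k(f^{(r)},1/n)_p$ by the sum bound of the preceding theorem. The same telescoping argument, now with exponent $k+r$, dominates the first sum by $c(k,r)t^k\int_t^1 \phi(u)u^{-k-r-1}\,du$; applied with exponent $r$ to the tail it dominates the second by $c_r \int_0^{2t} \phi(u) u^{-r-1}\,du$. The extra slice $\int_t^{2t}\phi(u)u^{-r-1}\,du$ is absorbed into $t^k\int_t^1\phi(u)u^{-k-r-1}\,du$ via the pointwise inequality $u^{-r-1} \leq (2t)^k u^{-k-r-1}$ valid for $u\in[t,2t]$, producing the stated estimate.

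The only technical point worth flagging is the bookkeeping of the boundary contributions --- the isolated $\nu=1$ term in the first sum, contributing a multiple of $\phi(1)t^k$, and the narrow interval $[t,1/n]$ handled as above. Neither presents a conceptual obstacle; the whole argument is essentially an exercise in replacing a monotone Riemann sum with its integral, carried out uniformly in $\phi$ using only its monotonicity.
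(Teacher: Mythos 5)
Your strategy is exactly the paper's: Theorem~$\mathrm{\tI_r}$ is presented there with no separate proof, merely as a ``restatement'' of the preceding Stechkin--Timan type theorem, and your sum--integral comparisons (splitting $(0,1]$ into the intervals $[1/\nu,1/(\nu-1)]$ and using only the monotonicity of $\phi$) are the standard and correct way to carry that restatement out. Part (a) and the conversion of the two sums in part (b) are fine, including the absorption of the slice $\int_t^{2t}\phi(u)u^{-r-1}\,du$ into the second integral.

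The one point you flag and then dismiss --- the isolated $\nu=1$ term --- is in fact the only place where the argument does not close for a completely general nondecreasing $\phi$. That term contributes $c\,t^k\,\tE_1(f)_p\le c\,t^k\phi(1)$, and to absorb it into the stated right-hand side you need $\phi(1)\le c\int_{1/2}^1\phi(u)u^{-k-r-1}\,du$, i.e.\ essentially $\phi(1)\le c\,\phi(1/2)$. This fails if $\phi$ jumps at $u=1$: take $\phi\equiv 0$ on $[0,1)$ and $\phi(1)=1$, so that the hypotheses allow $f=\cos x$ while the asserted bound becomes $\omega_k(f^{(r)},t)_p\le 0$. So strictly speaking the restated theorem itself requires an implicit regularity of $\phi$ at $u=1$ (continuity there, or $\phi(1)\le c\,\phi(1/2)$, both of which hold for the majorants $\phi(u)=u^\alpha$ actually used in Theorem~$\mathrm{\tC_r}$); under any such assumption your proof is complete via $\phi(1/2)\le c(k,r)\int_{1/2}^1\phi(u)u^{-k-r-1}\,du$. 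You should either add that hypothesis or record the $\nu=1$ term as an explicit extra summand $c\,t^k\tE_1(f)_p$, in the spirit of the term $c(N,k,r)t^kE_{k+r}(f)_p$ that the authors do keep in the algebraic analogue, Theorem~$\mathrm{I_r}$.
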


Finally, we have a constructive characterization of functions
$f\in\tW^r$ such that $f^{(r)}\in\tLip(\alpha-r,p)$.

\begin{theoremconr}[Constructive characterization, $r\in\N$]
 Let $r\in\N$, $\alpha > r$,
 $f\in\tW^r_p$, $1\leq p\leq \infty$, and
$$
\omega_k(f^{(r)},t)_p\le t^{\alpha-r}.
$$
Then
$$
\tE_n(f)_p\le c(k,r) n^{-\alpha},\quad n\ge 1.
$$
Conversely, if $f\in\tLp$, $1\leq p\leq \infty$, $r<\alpha<k+r$, and
$$
\tE_n(f)_p\le n^{-\alpha}, \quad n\ge 1,
$$
then $f$ is a.e. identical with a function from $\tW^r_p$ and
$$
\omega_k(f^{(r)},t)_p\le c(k,r,\alpha)t^{\alpha-r}.
$$
\end{theoremconr}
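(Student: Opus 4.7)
The statement is a constructive characterization and both directions should follow almost mechanically from the direct and inverse theorems stated just above (Theorems $\mathbf{\tD_r}$ and $\mathbf{\tI_r}$). The plan is simply to feed the Lipschitz-type hypothesis into those two results and verify that the exponents come out correctly.

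For the direct implication, I assume $f \in \tW^r_p$ with $\omega_k(f^{(r)},t)_p \le t^{\alpha-r}$. Plugging $t = n^{-1}$ into this bound and applying Theorem $\mathbf{\tD_r}$ gives
\[
\tE_n(f)_p \le c(k,r)\, n^{-r}\, \omega_k(f^{(r)},n^{-1})_p \le c(k,r)\, n^{-r}\cdot n^{-(\alpha-r)} = c(k,r)\, n^{-\alpha},
\]
which is exactly the conclusion. This direction uses only $\alpha > r$ implicitly via the hypothesis $f\in \tW^r_p$; the restriction $\alpha < k+r$ plays no role here.

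For the converse, I apply Theorem $\mathbf{\tI_r}$ with $\phi(u)=u^\alpha$. The integrability hypothesis $\int_0^1 \phi(u) u^{-r-1}\,du = \int_0^1 u^{\alpha-r-1}\,du < \infty$ holds since $\alpha > r$, so the theorem yields $f\in \tW^r_p$ (a.e.) and
\[
\omega_k(f^{(r)},t)_p \le c(k,r)\left(\int_0^t u^{\alpha-r-1}\,du + t^k\int_t^1 u^{\alpha-k-r-1}\,du\right).
\]
The first integral evaluates to $t^{\alpha-r}/(\alpha-r)$. For the second, the assumption $\alpha < k+r$ makes the exponent $\alpha - k - r - 1$ strictly less than $-1$, so $\int_t^1 u^{\alpha-k-r-1}\,du \le (k+r-\alpha)^{-1}\, t^{\alpha-k-r}$ and multiplication by $t^k$ produces another $t^{\alpha-r}$ term. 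Combining, $\omega_k(f^{(r)},t)_p \le c(k,r,\alpha)\, t^{\alpha-r}$, as claimed.

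There is really no main obstacle: the only subtle point is making sure the role of the two strict inequalities $r < \alpha$ (needed for integrability near $0$ in the first integral) and $\alpha < k+r$ (needed for the second integral to be controlled by $t^{\alpha-r}$) is correctly identified. Endpoint cases $\alpha = r$ or $\alpha = k+r$ produce logarithmic losses and lie outside the stated hypotheses, so they need no treatment here.
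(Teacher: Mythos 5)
Your proof is correct and is exactly the derivation the paper intends: the theorem is stated as an immediate consequence of Theorems $\mathbf{\tD_r}$ and $\mathbf{\tI_r}$ (with $\phi(u)=u^\alpha$), and your verification of the two exponent conditions $\alpha>r$ and $\alpha<k+r$ is the whole content. The only cosmetic point is that Theorem $\mathbf{\tI_r}$ gives the bound for $0<t\le 1/2$, so for larger $t$ one appeals to the trivial bound $\omega_k(f^{(r)},t)_p\le 2^k\|f^{(r)}\|_p$ at the cost of enlarging the constant.
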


%
%
%
%

\subsection{Algebraic approximation}

Let $\Lp[-1,1]$, $1\le p\le\infty$ denote the usual $\Lp$ space equipped with the norm
\[
\|f\|_p:=\left(\int_{-1}^1|f(x)|^p dx\right)^{1/p},
\]
where by $L_\infty[-1,1]$ we mean $C[-1,1]$ equipped with the uniform norm.

Let $\mathcal P_n$ denote the space of algebraic polynomials of degree $<n$ and set
\[
E_n(f)_p:=\inf_{P_n\in\mathcal P_n}\|f-P_n\|_p
\]
the degree of best approximation of $f$ by algebraic polynomials in $\Lp$.


Define
\[
\Delta^k_h(f,x;[-1,1]):=\begin{cases}\Delta^k_h(f,x),&\quad x\pm kh/2\in[-1,1],\\
0,&\quad{\rm otherwise},\end{cases}
\]
where $\Delta^k_h(f,x)$ was defined in \ineq{delta}.

Finally, define the Ditzian-Totik (DT) moduli of smoothness \cite{dt}, by
\be\label{omeg}
\omega_k^\varphi(f,t)_p:=\sup_{0<h\le t}\|\Delta^k_{h\varphi(\cdot)}(f,\cdot;[-1,1])\|_p
\ee
where $\varphi(x):=(1-x^2)^{1/2}$.

It is well known that the DT moduli of smoothness yield results which are   completely analogous to Theorems~$\mathrm{\tD_0}$, $\mathrm{\tI_0}$
and $\mathrm{\tC_0}$. Namely, we have the following results (see \cite{dt}).

\begin{theoremdira}
  Let $k\in\N$. If $f\in\Lp[-1,1]$, $1\le p\le\infty$,   then
$$
E_n(f)_p\le c(k)\omega^\varphi_k(f,n^{-1})_p,\quad n\ge k.
$$
\end{theoremdira}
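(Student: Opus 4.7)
The plan is to reduce Theorem $\mathbf{D_0}$ to its periodic analogue, Theorem $\mathbf{\tD_0}$, via the Chebyshev substitution $x=\cos\theta$. Set $g(\theta):=f(\cos\theta)$, which is $2\pi$-periodic and even. The scheme has three stages: (a) compare the moduli, $\omega_k(g,t)_\infty\le c(k)\,\omega_k^\varphi(f,t)_\infty$ (with an appropriate weighted analogue for finite $p$); (b) apply Theorem $\mathbf{\tD_0}$ to get $T_n\in\mathcal T_n$ approximating $g$; (c) replace $T_n$ by its even part---an equally good approximant since $g$ is even---and note that an even element of $\mathcal T_n$ has the form $P_n(\cos\theta)$ for some $P_n\in\mathcal P_n$, so $\|f-P_n\|_p$ inherits the resulting bound after reversing the substitution.

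For stage (a), the main tool is the identity
\[
\cos(\theta+\tau)-\cos\theta = -2\sin(\theta+\tau/2)\sin(\tau/2),
\]
which shows that the nodes $\cos(\theta+(i-k/2)h)$ of $\Delta_h^k g(\theta)$ lie on an arc centred at $\cos\theta$ with consecutive spacing comparable to $h|\sin\theta|=h\varphi(\cos\theta)$. A direct computation, with a possible $O(h)$ shift of the base point near $\pm 1$, identifies $\Delta_h^k g(\theta)$ with $\Delta_{H\varphi(\xi)}^k(f,\xi;[-1,1])$ for some $\xi\in[-1,1]$ and $H$ comparable to $h$, from which the pointwise---and hence the $L_\infty$---comparison of the moduli follows at once.

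The main obstacle is the case $1\le p<\infty$: because of the Jacobian $dx=-\varphi\,d\theta$, the substitution converts $\|\cdot\|_{\Lp[-1,1]}$ into a weighted $\tLp$-norm with weight $\varphi^{-1}$, so Theorem $\mathbf{\tD_0}$ in its stated, unweighted form does not directly deliver what is needed. The standard workaround, which I would follow, is to bypass the substitution and build a Jackson-type polynomial operator $J_n:\Lp[-1,1]\to\mathcal P_n$ directly, requiring that $J_n$ be $\Lp$-bounded uniformly in $n$ and reproduce $\mathcal P_k$. Using the Chebyshev knots $x_j:=\cos(j\pi/n)$ and the intervals $I_j:=[x_{j+1},x_j]$, one has $|I_j|\asymp\varphi(x)/n$ for $x\in I_j$; combining a local Whitney-type estimate on each $I_j$ with $\ell_p$ summation across $j$ then yields $\|f-J_nf\|_p\le c(k)\,\omega_k^\varphi(f,1/n)_p$, which is the claimed inequality.
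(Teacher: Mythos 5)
The paper offers no proof of Theorem $\mathbf{D_0}$ --- it is quoted from Ditzian and Totik \cite{dt} --- so your attempt can only be judged on its own terms, and the transference route of stages (a)--(c) fails at stage (a). The inequality $\omega_k(g,t)_\infty\le c(k)\,\omega_k^\varphi(f,t)_\infty$ for $g=f\circ\cos$ is \emph{false} for $k\ge2$. The nodes $\cos(\theta+(i-k/2)h)$, $i=0,\dots,k$, are not equally spaced in $x$: consecutive gaps equal $2\sin(\theta+(i-(k-1)/2)h)\sin(h/2)$ and vary with $i$, so $\Delta_h^kg(\theta)$ is a $k$th difference of $f$ over \emph{non-uniform} nodes and cannot be identified with any $\Delta^k_{H\varphi(\xi)}(f,\xi;[-1,1])$; the discrepancy is not a harmless $O(h)$ shift of the base point, and for $k\ge 2$ it accumulates. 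Concretely, take $f(x)=(1-x)\log(1-x)$. Then $\varphi^2f''=1+x$ is bounded, so $\omega_2^\varphi(f,t)_\infty\le ct^2$ by the standard estimate $\omega_k^\varphi(f,t)_p\le ct^k\|\varphi^kf^{(k)}\|_p$; yet $\Delta_t^2(g,0)=2(1-\cos t)\log(1-\cos t)\sim 2t^2\log t$, whence $\omega_2(g,t)_\infty\ge ct^2\log(1/t)$. The true comparison between $\omega_k(g,\cdot)_\infty$ and $\omega_k^\varphi(f,\cdot)_\infty$ carries exactly this Marchaud-type logarithmic loss, which is why Theorem $\mathbf{D_0}$, even for $p=\infty$, cannot be obtained by simply pushing the periodic Jackson theorem through $x=\cos\theta$.

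Your fallback in the last paragraph --- a Jackson operator built on the Chebyshev partition $x_j=\cos(j\pi/n)$, local Whitney estimates on the $I_j$ (note $|I_j|\asymp\varphi(x)/n+n^{-2}$, not just $\varphi(x)/n$, which matters precisely in the endpoint zones where $h\varphi(x)$ degenerates), and $\ell_p$ summation --- is the correct strategy and is essentially how the theorem is proved in \cite{dt}*{Chapter 7}. But as written it is a three-line outline of the hard part: one must construct an intermediate piecewise-polynomial approximant controlled by $\omega_k^\varphi(f,1/n)_p$ including near $\pm1$, and then replace it by a single element of $\mathcal P_n$ using kernels with sufficient off-diagonal decay, keeping the $\Lp$ bounds uniform in $n$. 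None of this is carried out, so the proposal should be regarded as naming the standard proof rather than giving one, and its primary route (a)--(c) must be abandoned.
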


\begin{theoreminva}
   Let $k\in\N$ and $\phi:[0,1]\mapsto[0,\infty)$ be a non
decreasing function such that $\phi(0+)=0$.
If a function $f\in \Lp[-1,1]$, $1\le p\le
\infty$, is such that
$$
E_n(f)_p\le \phi\left( n^{-1} \right), \quad n\ge k,
$$
then
$$
\omega^\varphi_k(f,t)_p\le c(k)t^k\int_t^1\frac{\phi(u)}{u^{k+1}}du,\quad
t\in[0,1/2].
$$
\end{theoreminva}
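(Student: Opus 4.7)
The strategy is to mimic the classical proof of the trigonometric inverse theorem $\mathrm{\tI_0}$, replacing Bernstein's inequality for $\mathcal T_n$ by the Ditzian--Totik weighted Bernstein inequality for algebraic polynomials. Fix $t\in(0,1/2]$, pick $m\in\N$ with $2^m\le 1/t<2^{m+1}$, and for each $j\ge 0$ with $2^j\ge k$ let $P_j\in\mathcal P_{2^j+1}$ be a near-best polynomial approximant, $\|f-P_j\|_p\le 2E_{2^j+1}(f)_p\le 2\phi(2^{-j})$. I would start from
\[
\omega^\varphi_k(f,t)_p\le \omega^\varphi_k(f-P_m,t)_p+\omega^\varphi_k(P_m,t)_p
\]
and treat the two pieces separately.

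The first piece is immediate: the trivial bound $\omega^\varphi_k(g,t)_p\le 2^k\|g\|_p$ together with the monotonicity of $\phi$ gives $\omega^\varphi_k(f-P_m,t)_p\le c\phi(2t)\le c(k)t^k\int_t^{2t}\phi(u)u^{-k-1}du$, which is dominated by the desired right-hand side. For the second, I would telescope $P_m=P_{j_0}+\sum_{j=j_0}^{m-1}(P_{j+1}-P_j)$, where $2^{j_0}$ is the smallest dyadic integer at least $k$; each difference lies in $\mathcal P_{2^{j+1}+1}$ and satisfies $\|P_{j+1}-P_j\|_p\le 4\phi(2^{-j})$. The key polynomial inequality
\[
\omega^\varphi_k(Q,t)_p\le c(k)(nt)^k\|Q\|_p,\qquad Q\in\mathcal P_n,
\]
combined with subadditivity then yields
\[
\omega^\varphi_k(P_m,t)_p\le c(k)t^k\sum_{j=j_0}^{m-1}2^{jk}\phi(2^{-j}),
\]
the initial summand $\omega^\varphi_k(P_{j_0},t)_p$ contributing at most $c(k)t^k\phi(1)$, which is harmless.

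The final step is to convert this dyadic sum into the integral $\int_t^1\phi(u)u^{-k-1}du$ by the routine estimate $2^{jk}\phi(2^{-j})\le c(k)\int_{2^{-j}}^{2^{-j+1}}\phi(u)u^{-k-1}du$, valid since $\phi$ is nondecreasing. The main obstacle is the weighted polynomial inequality above; it follows by combining $\omega^\varphi_k(Q,t)_p\le ct^k\|\varphi^k Q^{(k)}\|_p$ with the Ditzian--Totik weighted Bernstein inequality $\|\varphi^k Q^{(k)}\|_p\le cn^k\|Q\|_p$ for $Q\in\mathcal P_n$ (both from \cite{dt}), and once these building blocks are granted the rest of the argument parallels the trigonometric case.
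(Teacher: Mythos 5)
Your overall strategy is sound and is, in essence, the proof the paper relies on: the paper does not prove Theorem $\mathrm{I_0}$ itself but quotes it from Ditzian--Totik \cite{dt}, where the argument is exactly this dyadic Stechkin-type scheme (phrased there through the $K$-functional $K_{k,\varphi}$ rather than directly through the modulus). The building blocks you invoke --- $\|\Delta^k_{h\varphi}g\|_p\le c(k)\|g\|_p$, the estimate $\omega^\varphi_k(Q,t)_p\le ct^k\|\varphi^kQ^{(k)}\|_p$, and the weighted Bernstein inequality $\|\varphi^kQ^{(k)}\|_p\le cn^k\|Q\|_p$ for $Q\in\mathcal P_n$ --- are all available in \cite{dt}, and the conversion of the dyadic sum into $\int_t^1\phi(u)u^{-k-1}\,du$ is correct.

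Two steps, however, do not close as written. The main one is the initial summand: applying your key inequality to $Q=P_{j_0}\in\mathcal P_{2^{j_0}+1}$ gives $c(k)(2^{j_0}t)^k\|P_{j_0}\|_p\le c(k)t^k\bigl(\|f\|_p+2\phi(1)\bigr)$, \emph{not} $c(k)t^k\phi(1)$; the norm $\|P_{j_0}\|_p$ is comparable to $\|f\|_p$, which must not survive into the final bound. The missing observation is that $\Delta^k_{h\varphi}$ annihilates $\mathcal P_k$, so one should either start the telescoping at a best approximant $Q_0\in\mathcal P_k$ (for which $\omega^\varphi_k(Q_0,t)_p=0$), or note that $\omega^\varphi_k(P_{j_0},t)_p=\omega^\varphi_k(P_{j_0}-Q_0,t)_p\le c(k)t^k\|P_{j_0}-Q_0\|_p\le c(k)t^kE_k(f)_p\le c(k)t^k\phi(1/k)$. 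The second, smaller, slip is in the first piece: with $2^m\le 1/t<2^{m+1}$ you only get $t\le 2^{-m}<2t$, hence $\|f-P_m\|_p\le 2\phi(2t)$, and monotonicity of $\phi$ bounds $t^k\int_t^{2t}\phi(u)u^{-k-1}\,du$ from below by $c(k)\phi(t)$, not by $\phi(2t)$; choosing $m$ one step larger, so that $2^{-m}\le t$, repairs this. (There remains a term of order $t^k\phi(1/k)$ that, for a completely general nondecreasing $\phi$, is not literally dominated by $t^k\int_t^1\phi(u)u^{-k-1}\,du$ near the right endpoint; this imprecision is inherent in the classical integral formulation of the statement and not a defect of your argument.)
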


\begin{theoremcona}
 Let $\alpha > 0$ and $f\in \Lp[-1,1]$, $1\le p\le \infty$.
If
$$
\omega^\varphi_k(f,t)_p\le t^\alpha,
$$
then
$$
E_n(f)_p\le c(k) n^{-\alpha}\quad n\ge k.
$$
Conversely, if $0<\alpha<k$ and
$$
E_n(f)_p\le  n^{-\alpha}, \quad n\ge k,
$$
then
$$
\omega^\varphi_k(f,t)_p\le c(k,\alpha)t^\alpha.
$$
\end{theoremcona}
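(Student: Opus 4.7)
The plan is to deduce both halves of Theorem~$\mathbf{C_0}$ immediately from the direct and inverse theorems $\mathbf{D_0}$ and $\mathbf{I_0}$ just established for the DT moduli; no new machinery is required.

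For the direct half, I would apply Theorem~$\mathbf{D_0}$ at the scale $t=1/n$. Under the hypothesis $\omega^\varphi_k(f,1/n)_p\le n^{-\alpha}$, this yields $E_n(f)_p\le c(k)\,\omega^\varphi_k(f,1/n)_p\le c(k)n^{-\alpha}$ for all $n\ge k$, which is the desired conclusion.

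For the converse half, I would invoke Theorem~$\mathbf{I_0}$ with the nondecreasing majorant $\phi(u):=u^\alpha$, which clearly satisfies $\phi(0+)=0$. The theorem then gives, for $0<t\le 1/2$,
\[
\omega^\varphi_k(f,t)_p\le c(k)\,t^k\int_t^1 u^{\alpha-k-1}\,du.
\]
The restriction $\alpha<k$ is precisely what makes this integral produce the correct scaling in $t$: explicitly,
\[
t^k\int_t^1 u^{\alpha-k-1}\,du=\frac{t^\alpha-t^k}{k-\alpha}\le\frac{t^\alpha}{k-\alpha},
\]
whence $\omega^\varphi_k(f,t)_p\le c(k,\alpha)\,t^\alpha$ on $(0,1/2]$. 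For $t\in(1/2,1]$ the same bound is trivial, since the DT modulus is majorized by a constant multiple of $\|f\|_p$, which is finite (e.g.\ via $\|f\|_p\le\|f-P_k\|_p+\|P_k\|_p$ using the hypothesis $E_k(f)_p\le k^{-\alpha}$), and $t^\alpha$ is bounded below on that range.

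There is no serious obstacle in this argument; the only real subtlety is the strict inequality $\alpha<k$, which is essential — the case $\alpha=k$ produces an extra logarithmic factor $\log(1/t)$ in the integral, and $\alpha>k$ runs into the well-known saturation of $\omega^\varphi_k$ at order $t^k$, so that the converse statement genuinely fails outside the stated range.
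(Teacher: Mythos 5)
Your proposal is correct and is exactly the route the paper intends: Theorem~$\mathbf{C_0}$ is presented as a known consequence of Theorems~$\mathbf{D_0}$ and $\mathbf{I_0}$ (cited from Ditzian--Totik), and your deduction --- the direct half via $t=1/n$ and the converse via $\phi(u)=u^\alpha$ with the computation $t^k\int_t^1 u^{\alpha-k-1}\,du\le t^\alpha/(k-\alpha)$ --- is the standard argument, including the correct handling of large $t$ and the observation that $\alpha<k$ is where the integral stays of order $t^{\alpha-k}$.
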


The purpose of this paper is to discuss our new moduli of smoothness (introduced in \cite{kls}) that
allow to obtain the analogs of Theorems~$\mathrm{\tD_r}$, $\mathrm{\tI_r}$ and $\mathrm{\tC_r}$.

\section{New moduli of smoothness}

\subsection{Definitions}

For $1\le p<\infty$ and $r\in\N$, denote
$$
\B_p^r:=\{f:f^{(r-1)}\in
AC_{loc}(-1,1)\quad\text{and}\quad\|f^{(r)}\varphi^r\|_p<+\infty\}.
$$

If $p=\infty$, then
$$
\B_\infty^r:=\{f:f\in C^r(-1,1)\quad\text{and}\quad\lim_{x\to\pm1}f^{(r)}(x)\varphi^r(x)=0\}.
$$
Finally, if $r=0$, then $\B_p^0:=\Lp[-1,1]$, $1\le p<\infty$ and $\B^0_\infty:=\C[-1,1]$.

For $f\in \B^r_p$, define
\[
\omega^\varphi_{k,r}(f^{(r)},t)_p := \sup_{0 < h\leq t}\|\mathcal W^r_{kh}(\cdot)
\Delta_{h\varphi(\cdot)}^k (f^{(r)},\cdot)\|_p,
\]
where
\[
\mathcal W_\delta(x):=\begin{cases}\bigl((1-x-\delta\varphi(x)/2)
(1+x-\delta\varphi(x)/2)\bigr)^{1/2},\\\qquad\qquad\qquad\quad1\pm x-\delta\varphi(x)/2
\in[-1,1],\\0,\qquad\qquad\qquad\,{\rm otherwise}.
\end{cases}
\]
Note that, if $r=0$, then
$$
\omega^\varphi_{k,0}(f,t)_p=\omega^\varphi_k(f,t)_p
$$
are the usual DT moduli defined in \ineq{omeg}.

It turns out (see \cite{kls}*{Lemma 3.2}) that if $f\in\B^r_p$, then
$$
\lim_{t\to 0+} \omega_{k,r}^\varphi(f^{(r)},t)_p = 0 .
$$

\subsection{Weighted DT moduli of smoothness}

Let
\[\overrightarrow\Delta_h^kf(x):=\left\{
\begin{array}{ll}\ds
\sum_{i=0}^k  {\binom ki}
(-1)^{k-i} f(x+ih),&\mbox{\rm if }\, x,x+kh  \in[-1,1]\,,\\
0,&\mbox{\rm otherwise},
\end{array}\right.\]
and
\[\overleftarrow\Delta_h^kf(x):=\left\{
\begin{array}{ll}\ds
\sum_{i=0}^k  {\binom ki}
(-1)^{i} f(x-ih),&\mbox{\rm if }\,  x-kh,x  \in[-1,1]\,,\\
0,&\mbox{\rm otherwise},
\end{array}\right.\]
be the forward and backward $k$th differences, respectively. Note that
\[
  \overrightarrow\Delta_h^k f(x):= \Delta_h^k(f,x+kh/2) \andd
\overleftarrow\Delta_h^k f(x):= \Delta_h^k(f,x-kh/2) .
\]

Let
\be \label{wght}
w(x):=w_{\a,\b}(x) := (1-x)^\a (1+x)^\b ,
\ee
 where $\a,\b \geq 0$,
and denote
\[
\Lp(w) := \Lp(\wab) := \left \{f:[-1,1]\mapsto\R   \st  \|\wab f\|_p < \infty \right\}.
\]

For $f\in \Lp(w)$, the weighted DT moduli of smoothness were defined (see \cite{dt}*{(8.2.10) and Appendix B}) by
\begin{eqnarray} \label{mod}
  \omega^\varphi_k(f,t)_{w,p} & := & \sup_{0<h\le t}\|w\Delta_{h\varphi}^kf\|_{\Lp[-1+2k^2h^2 ,1-2k^2h^2]}  \\ \nonumber
& &+  \sup_{0<h\le 2k^2t^2 }
\|w \overrightarrow\Delta_h^kf\|_{\Lp[-1,-1+2k^2t^2]} \\ \nonumber
&& +\sup_{0<h\le 2k^2t^2}\|w \overleftarrow\Delta_h^kf\|_{\Lp[1-2k^2t^2,1]}.
\end{eqnarray}
 %
%
The first term on the right hand side of \ineq{mod}  is the main part modulus which is denoted by $\Omega^\varphi_k(f,t)_{w,p}$ (see \cite{dt}*{(8.1.2)}) and is further discussed in Section~\ref{DTlast}.


It was shown in \cite{dt}*{Theorem 6.1.1} that
$\omega^{\varphi}_k(f,t)_{w,p}$ is equivalent to the following weighted $K$-functional $K_{k,\varphi}(f,t^k)_{w,p}$ (with $0<t\leq t_0$):
\[
K_{k,\varphi}(f,t^k)_{w,p}:=\inf_{g^{(k-1)}\in\AC_{\loc}}\left(
\|(f-g)w\|_{p} +t^k\|w\varphi^kg^{(k)}\|_{p}\right).
\]

%
%
%

\subsection{Properties of the new moduli}

For $r\ge0$ and $f\in\B_p^r$, we denote
$$
K^\varphi_{k,r}(f^{(r)},t^k)_p:=\inf_{g\in\B_p^{k+r}}(\|(f^{(r)}-g^{(r)})\varphi^r\|_p
+t^k\|g^{(k+r)}\varphi^{k+r}\|_p) .
$$
Then, we have the following equivalence results (see \cite{kls}*{Theorem 2.7}).

\begin{thm}  If $k\in\N$, $r\in\N_0$, $1\leq p\leq \infty$ and $f\in\B_p^r$,  then, for all $0<t\leq 2/k$,
\[
c  K^\varphi_{k,r}(f^{(r)},t^k)_p    \leq \omega_{k,r}^\varphi(f^{(r)},t)_p\leq c  K^\varphi_{k,r}(f^{(r)},t^k)_p   ,
\]
where constants $c$ may depend only on $k$, $r$ and $p$.
\end{thm}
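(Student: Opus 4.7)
The plan is to establish the two inequalities separately, following the classical Johnen--Scherer scheme that underlies every proof of modulus/$K$-functional equivalence, but adapted to the weight $\mathcal W_{kh}$.

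\medskip

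\textbf{Direction $\omega_{k,r}^\varphi(f^{(r)},t)_p\le c\, K^\varphi_{k,r}(f^{(r)},t^k)_p$.}
Fix an arbitrary admissible $g\in\B_p^{k+r}$ and $0<h\le t$, and split
\[
\Delta^k_{h\varphi(x)}(f^{(r)},x)=\Delta^k_{h\varphi(x)}(f^{(r)}-g^{(r)},x)+\Delta^k_{h\varphi(x)}(g^{(r)},x).
\]
For the first summand I would use the key pointwise inequality
\[
\mathcal W_{kh}(x)\le \varphi\bigl(x+(i-k/2)h\varphi(x)\bigr),\qquad 0\le i\le k,
\]
which is immediate from the definition of $\mathcal W_\delta$ and the identities $1\pm y=(1\pm x)\mp (i-k/2)h\varphi(x)$ for $y=x+(i-k/2)h\varphi(x)$. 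Combined with the standard DT change of variable $x\mapsto x+(i-k/2)h\varphi(x)$ (whose Jacobian is bounded above and below for $h\le 2/k$), this yields
\[
\|\mathcal W^r_{kh}\Delta^k_{h\varphi}(f^{(r)}-g^{(r)},\cdot)\|_p\le c\|(f^{(r)}-g^{(r)})\varphi^r\|_p.
\]
For the second summand I would use the integral representation
\[
\Delta^k_{h\varphi(x)}(g^{(r)},x)=\int_{-h\varphi(x)/2}^{h\varphi(x)/2}\!\!\!\cdots\!\int_{-h\varphi(x)/2}^{h\varphi(x)/2}g^{(k+r)}\!\bigl(x+u_1+\cdots+u_k\bigr)\,du_1\cdots du_k,
\]
together with the same endpoint bound $\mathcal W_{kh}(x)\le \varphi(x+\sum u_j)$ (valid in the integration range) and the explicit factor $(h\varphi(x))^k$, obtaining $\le c h^k\|\varphi^{k+r}g^{(k+r)}\|_p\le c t^k\|\varphi^{k+r}g^{(k+r)}\|_p$ after another Jacobian-bounded change of variable. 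Taking $\sup$ over $h$ and infimum over $g$ gives the desired direction.

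\medskip

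\textbf{Direction $K^\varphi_{k,r}(f^{(r)},t^k)_p\le c\,\omega_{k,r}^\varphi(f^{(r)},t)_p$.}
Here I must exhibit one concrete near-extremal $g=g_t$ and verify both terms of the $K$-functional. The natural candidate is a Steklov-type average built from the identity
\[
f^{(r)}(x)=-\frac{1}{\binom{k}{k/2}(-1)^{k}}\sum_{i\ne k/2}(-1)^{k-i}\binom{k}{i}f^{(r)}(y_i)+\tfrac{(-1)^{k}}{\binom{k}{k/2}}\Delta^k_{h\varphi(x)}(f^{(r)},x),
\]
with $y_i=x+(i-k/2)h\varphi(x)$, averaged over $h\in[0,t]$ after multiplying by a smooth cutoff supported where $\mathcal W_{kh}(x)>0$. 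The first term of the $K$-functional, $\|(f^{(r)}-g_t^{(r)})\varphi^r\|_p$, is then estimated directly from the $\Delta^k$ remainder multiplied by $\varphi^r\le c\mathcal W^r_{kh}$ on the support of the cutoff. For the second term, $t^k\|\varphi^{k+r}g_t^{(k+r)}\|_p$, differentiating the averaged representation $k$ times produces divided differences that one rewrites again as $\Delta^k_{h\varphi}(f^{(r)},\cdot)/(h\varphi)^k$ against an integrable kernel on $[0,t]$; the factor $\varphi^{k+r}$ absorbs $(h\varphi)^{-k}\cdot \varphi^r\le c(h^{-k}\mathcal W^r_{kh})\varphi^{-k}$ after invoking the same pointwise weight comparison.

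\medskip

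\textbf{Main obstacle.} The technically delicate point is the construction and handling of $g_t$ in the lower direction, because $\mathcal W_{kh}$ vanishes in a neighborhood of $\pm 1$ of size $\sim (kh)^2$ whereas $\varphi^r g_t^{(r)}$ must be controlled on all of $(-1,1)$. This forces a careful definition of $g_t$ on the boundary layers, usually by extending polynomially (of degree $<k+r$) the interior definition, and proving that the boundary contributions are controlled by the full modulus through Hardy-type inequalities. Once this is in place, both bounds fit together to yield the equivalence on $0<t\le 2/k$.
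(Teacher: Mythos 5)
The paper itself does not prove this theorem: it is quoted from \cite{kls}*{Theorem 2.7}, so there is no in-text argument to compare yours against. Judged on its own, your outline follows the standard Johnen--Scherer scheme, and your key pointwise inequality $\mathcal W_{kh}(x)\le\varphi(y)$ for every node $y=x+(i-k/2)h\varphi(x)$ is correct (indeed $1-y\ge 1-x-kh\varphi(x)/2$ and $1+y\ge 1+x-kh\varphi(x)/2$, so $\varphi^2(y)\ge\mathcal W^2_{kh}(x)$). This, together with the bounded Jacobian of $x\mapsto x+(i-k/2)h\varphi(x)$ on the support of the difference, does dispose of the term $\Delta^k_{h\varphi}(f^{(r)}-g^{(r)},\cdot)$. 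But both halves of your argument have genuine gaps, concentrated at the same place: the boundary layer $1-|x|\lesssim (kh)^2$, where the support of $\Delta^k_{h\varphi(\cdot)}$ ends and $\mathcal W_{kh}$ vanishes.

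The serious gap is in the smooth-term estimate of the upper bound. From your integral representation and the endpoint bound one only gets
\[
\mathcal W^r_{kh}(x)\,\bigl|\Delta^k_{h\varphi(x)}(g^{(r)},x)\bigr|\le\int\!\cdots\!\int \varphi^{k+r}(y)\,\bigl|g^{(k+r)}(y)\bigr|\,\varphi^{-k}(y)\,du_1\cdots du_k,\qquad y=x+\textstyle\sum u_j,
\]
and to convert the volume $(h\varphi(x))^k$ of the integration domain into the factor $h^k$ you need $\varphi(y)\ge c\,\varphi(x)$ throughout that domain. This fails near the edge of the support: there $\varphi(x)\sim kh$ while $\varphi(y)$ at the extreme nodes tends to $0$, and the only available lower bound, $\varphi(y)\ge\mathcal W_{kh}(x)$, leaves the unbounded factor $(\varphi(x)/\mathcal W_{kh}(x))^k$. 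Even for $p=\infty$ the resulting integral $\int_{x_{\min}}^{x_{\max}}\varphi^{-k}(y)\,dy$ diverges at the edge when $k\ge2$. So the inequality $\|\mathcal W^r_{kh}\Delta^k_{h\varphi}(g^{(r)},\cdot)\|_p\le ct^k\|\varphi^{k+r}g^{(k+r)}\|_p$ --- the analogue of the hard half of \cite{dt}*{Theorem 2.1.1} and of the weighted estimates of \cite{dt}*{Chapter 6} --- cannot be obtained from the pointwise weight comparison alone; it requires a separate boundary-layer argument (one-sided differences and Hardy-type inequalities, or a reduction to the weighted DT main-part modulus). For the converse direction you yourself identify the boundary-layer construction of $g_t$ as ``the technically delicate point'' and do not carry it out; since that is exactly where the content of the theorem lies, the proof is not complete. (A smaller issue: your averaging identity isolates the term $i=k/2$ of the symmetric difference and is therefore only available for even $k$; for odd $k$ one must build the Steklov average from $\overrightarrow\Delta{}^k_h$ or $\overleftarrow\Delta{}^k_h$.)
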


\begin{cor}  If $k\in\N$, $r\in\N_0$, $1\leq p\leq \infty$ and $f\in\B_p^r$,  then,  for all $0<t\leq 2/k$,
\[
c K_{k,\varphi}(f^{(r)},t^k)_{\varphi^r,p}    \leq \omega_{k,r}^\varphi(f^{(r)},t)_p\leq c  K_{k,\varphi}(f^{(r)},t^k)_{\varphi^r,p}  .
\]
\end{cor}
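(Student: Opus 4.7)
The plan is to reduce the corollary to the preceding theorem by proving that the two $K$-functionals are equivalent:
\[
c^{-1} K^\varphi_{k,r}(f^{(r)},t^k)_p \le K_{k,\varphi}(f^{(r)},t^k)_{\varphi^r,p} \le c\, K^\varphi_{k,r}(f^{(r)},t^k)_p.
\]
Once this is in hand, the corollary follows immediately by composing with the two-sided estimate provided by the theorem. Inspection of the two definitions shows that they are infima of the same objective functional taken over essentially the same class of competitors under the identification $g\leftrightarrow G^{(r)}$, where $G$ runs over $\B_p^{k+r}$ in $K^\varphi_{k,r}$ and $g$ runs over functions with $g^{(k-1)}\in\AC_{\loc}(-1,1)$ in $K_{k,\varphi}$.

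For the direction $K_{k,\varphi}\le K^\varphi_{k,r}$, which in fact holds with constant one, I would take an arbitrary competitor $G\in\B_p^{k+r}$ and set $g:=G^{(r)}$. Since $G^{(k+r-1)}\in\AC_{\loc}(-1,1)$ by the definition of $\B_p^{k+r}$, we have $g^{(k-1)}=G^{(k+r-1)}\in\AC_{\loc}(-1,1)$, so $g$ is admissible in the infimum defining $K_{k,\varphi}$. Moreover,
\[
\|(f^{(r)}-g)\varphi^r\|_p + t^k\|g^{(k)}\varphi^{k+r}\|_p = \|(f^{(r)}-G^{(r)})\varphi^r\|_p + t^k\|G^{(k+r)}\varphi^{k+r}\|_p,
\]
and taking the infimum over $G$ gives the required bound.

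For the reverse direction, given any admissible $g$ for $K_{k,\varphi}$, I would let $G$ be any $r$-fold indefinite integral of $g$ based (say) at $0$. Then $G^{(r)}=g$ and $G^{(k+r)}=g^{(k)}$, so the two objective sums are identical, and it only remains to verify $G\in\B_p^{k+r}$. For $1\le p<\infty$ this is immediate: $G^{(k+r-1)}=g^{(k-1)}\in\AC_{\loc}(-1,1)$ and $\|G^{(k+r)}\varphi^{k+r}\|_p=\|g^{(k)}\varphi^{k+r}\|_p<\infty$. The only real obstacle is the case $p=\infty$, where $\B_\infty^{k+r}$ additionally demands $G\in C^{k+r}(-1,1)$ and the boundary vanishing $\lim_{x\to\pm 1}G^{(k+r)}(x)\varphi^{k+r}(x)=0$. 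I would dispose of this in the standard way: first mollify $g$ on compact subintervals of $(-1,1)$ and then apply a smooth cut-off enforcing the required behaviour at $\pm 1$, choosing the parameters so that both summands are altered by at most a constant factor. This is the step responsible for the genuine constant $c>1$ in the final estimate.
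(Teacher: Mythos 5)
Your overall strategy --- deducing the corollary from the preceding theorem by proving the two-sided comparison $K_{k,\varphi}(f^{(r)},t^k)_{\varphi^r,p}\sim K^\varphi_{k,r}(f^{(r)},t^k)_p$ --- is reasonable, and the direction $K_{k,\varphi}\le K^\varphi_{k,r}$ with constant one is exactly as you say, as is the reverse direction for $1\le p<\infty$. The gap is the case $p=\infty$. Asserting that one may ``mollify and then cut off, choosing the parameters so that both summands are altered by at most a constant factor'' conceals the real difficulty. If a cut-off annihilates $g^{(k)}$ on $(1-\e,1)$, the induced change in $g$ itself is controlled only by $k$-fold integration of $|g^{(k)}|\le\|\varphi^{k+r}g^{(k)}\|_\infty\,\varphi^{-(k+r)}$ over that interval; since $\varphi^{-(k+r)}$ is non-integrable near $\pm1$ for $k+r\ge2$, the resulting perturbation of the first summand is \emph{not} a constant multiple of $\|(f^{(r)}-g)\varphi^r\|_\infty$ --- it is of order $\e^{k/2}\|\varphi^{k+r}g^{(k)}\|_\infty$ (with possible logarithmic factors when an exponent passes through $-1$ in the iterated integration) and must instead be absorbed into $t^k$ times the \emph{second} summand, which forces the cut-off to live at distance $\approx t^2$ from $\pm1$. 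This is the entire content of the $p=\infty$ case and it is not supplied; moreover one must still produce $\tilde g\in C^{k}(-1,1)$ from a $g$ whose $k$th derivative exists only a.e., and check that mollification respects the degenerate weight.

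For comparison, the paper does not argue this way at all: the corollary is obtained by chaining the equivalence $\omega^\varphi_{k,r}(f^{(r)},t)_p\sim K^\varphi_{k,r}(f^{(r)},t^k)_p$ of \cite{kls}*{Theorem 2.7} with the equivalence $K_{k,\varphi}(F,t^k)_{w,p}\sim\omega^\varphi_k(F,t)_{w,p}$ of \cite{dt}*{Theorem 6.1.1} and the equivalence, established in \cite{kls}, of the weighted DT modulus $\omega^\varphi_k(f^{(r)},t)_{\varphi^r,p}$ with the new modulus. That route sidesteps your endpoint problem entirely, because the smooth competitor realizing the DT $K$-functional can be taken to be a polynomial, and any polynomial $Q$ lies in $\B_\infty^{k+r}$ automatically since $\lim_{x\to\pm1}\varphi^{k+r}(x)Q^{(k+r)}(x)=0$. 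If you wish to keep your direct comparison of the two $K$-functionals, replacing the mollify-and-cut-off step by such a polynomial realization is the cleanest repair.
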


Also, the following was proved in \cite{kls}*{Theorem 7.1}.
\begin{thm}\label{hierarchy}
If $f\in\B^{r+1}_p$, $1\le p \le \infty$, $r\in\N_0$ and $k\ge2$, then
\[
\omega_{k,r}^\varphi(f^{(r)},t)_p \leq
ct\omega_{k-1,r+1}^\varphi(f^{(r+1)},t)_p .
\]
\end{thm}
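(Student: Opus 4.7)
The plan is to derive the inequality from the elementary one-step integral identity that expresses a $k$-th symmetric difference as an average of $(k-1)$-th symmetric differences of the derivative. Since $f\in\B_p^{r+1}$, $f^{(r)}$ is locally absolutely continuous on $(-1,1)$, so for every $x$ in the support of $\mathcal W_{kh}$, writing $\delta := h\varphi(x)$ and using the substitution $u = h\varphi(x)s$,
\[
\Delta_\delta^k(f^{(r)},x) \;=\; h\varphi(x)\int_{-1/2}^{1/2}\Delta_\delta^{k-1}(f^{(r+1)},\, x + h\varphi(x)s)\,ds .
\]
This builds in the desired prefactor $h$, at the cost of turning the outer $k$-difference into a $(k-1)$-difference at a shifted point and producing an extra $\varphi(x)$ that must be absorbed into the weight.

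Next, I would multiply by $\mathcal W_{kh}^r(x)$, take the $\Lp$-norm in $x$, and apply generalized Minkowski to pull the $s$-integral outside. For each fixed $s\in[-1/2,1/2]$, change variable $y = x + h\varphi(x)s$; on the support of $\mathcal W_{kh}$ the Jacobian $1 - hxs/\varphi(x)$ stays bounded away from $0$ and $\infty$. Using the identity $\mathcal W_\delta(x)^2 = \varphi(x)^2 - \delta\varphi(x) + \delta^2\varphi(x)^2/4$ one verifies that $\varphi(y)\sim\varphi(x)$ for $|y-x|\le h\varphi(x)/2$ and, together with the monotonicity $\mathcal W_{(k-1)h}\ge\mathcal W_{kh}$, the pointwise weight comparison
\[
\varphi(x)\,\mathcal W_{kh}^{r}(x) \;\le\; c\,\mathcal W_{(k-1)h}^{r+1}(y) .
\]
Finally, rewrite $h\varphi(x) = h'\varphi(y)$ with $h' := h\varphi(x)/\varphi(y)\le c_1 h$, so that $\Delta_{h\varphi(x)}^{k-1}(f^{(r+1)},y) = \Delta_{h'\varphi(y)}^{k-1}(f^{(r+1)},y)$, and use monotonicity of $\mathcal W_\delta$ in $\delta$ to replace $\mathcal W_{(k-1)h}^{r+1}(y)$ by $\mathcal W_{(k-1)h'}^{r+1}(y)$. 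This puts the integrand exactly in the shape of the right-hand modulus, yielding
\[
\|\mathcal W_{kh}^r\,\Delta_{h\varphi(\cdot)}^k(f^{(r)},\cdot)\|_p \;\le\; c\,h\,\omega_{k-1,r+1}^\varphi(f^{(r+1)},\,c_1 h)_p,
\]
and an elementary scaling $\omega_{k-1,r+1}^\varphi(g,\lambda t)_p \le C(\lambda)\,\omega_{k-1,r+1}^\varphi(g,t)_p$ followed by $\sup_{0<h\le t}$ finishes the proof.

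The main obstacle I expect is the weight comparison $\varphi(x)\mathcal W_{kh}^r(x)\le c\mathcal W_{(k-1)h}^{r+1}(y)$ in the regime where $x$ is near the boundary of the support of $\mathcal W_{kh}$, since there $\mathcal W_{kh}(x)\to 0$ while $\mathcal W_{(k-1)h}(y)$ remains of order $\sqrt{k}\,h$ and $\varphi(x)$ of order $kh$. The quantitative input one needs is that on the support of $\mathcal W_{kh}$ one has $\varphi(x) \gtrsim kh$, which keeps the ratio $\varphi/\mathcal W_{(k-1)h}$ uniformly bounded (with a constant depending on $k$). Once this estimate and the compatible behavior of $\varphi$ and $\mathcal W_{(k-1)h}$ at the nearby points $x$ and $y$ are in hand, the remainder of the argument is bookkeeping with the change of variable and Minkowski.
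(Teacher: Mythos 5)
Your overall strategy---the identity
\[
\Delta_{h\varphi(x)}^k(f^{(r)},x)=h\varphi(x)\int_{-1/2}^{1/2}\Delta_{h\varphi(x)}^{k-1}\bigl(f^{(r+1)},x+h\varphi(x)s\bigr)\,ds,
\]
Minkowski's inequality, a change of variables, and a pointwise weight comparison---is the natural direct attack, and you correctly isolate the key quantitative input that $\varphi(x)\ge kh/2$ on the support of $\mathcal W_{kh}$ (which does make the comparison $\varphi(x)\mathcal W_{kh}^r(x)\le c(k)\,\mathcal W_{(k-1)h}^{r+1}(y)$ plausible, with a constant growing like $\sqrt{k}$). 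The genuine gap is in the final step. After substituting $y=x+h\varphi(x)s$, the $(k-1)$st difference has step $h\varphi(x(y))=h'(y)\varphi(y)$ where $h'(y)=h\varphi(x(y))/\varphi(y)$ is \emph{position-dependent}, and it genuinely varies: near the edge of the support one has $\varphi(y)\asymp\varphi(x)/\sqrt2$ for $k=2$, while in the middle $h'\approx h$. The modulus $\omega^\varphi_{k-1,r+1}(f^{(r+1)},t)_p$ is a supremum over \emph{fixed} steps $H\le t$ of $\|\mathcal W^{r+1}_{(k-1)H}\Delta^{k-1}_{H\varphi(\cdot)}(f^{(r+1)},\cdot)\|_p$, and the $\Lp$-norm of the ``diagonal'' quantity $\mathcal W^{r+1}_{(k-1)h'(y)}(y)\,\Delta^{k-1}_{h'(y)\varphi(y)}(f^{(r+1)},y)$, in which the step varies with $y$, is not majorized by that supremum without a further argument; your sentence ``this puts the integrand exactly in the shape of the right-hand modulus'' conflates these two objects. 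A secondary problem in the same step: since $h'$ can exceed $h$, monotonicity of $\mathcal W_\delta$ in $\delta$ gives $\mathcal W_{(k-1)h'}(y)\le\mathcal W_{(k-1)h}(y)$, which is the \emph{wrong} direction for the replacement you propose, and near the boundary of the support of $\mathcal W_{(k-1)h'}(\cdot)$ the two weights are not comparable at all.

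For comparison: the present paper does not reprove this theorem (it quotes it from \cite{kls}*{Theorem 7.1}), but its proof of the weighted analogue \eqref{derivative} in Section 5 shows the intended route, which sidesteps the variable-step issue entirely: pass to the main-part modulus $\Omega_k^\varphi$, invoke the Ditzian--Totik inequalities $\omega_k^\varphi(f,t)_{w,p}\le c\int_0^t\Omega_k^\varphi(f,\tau)_{w,p}\,d\tau/\tau$ and $\Omega_k^\varphi(f,t)_{w,p}\le ct\,\Omega_{k-1}^\varphi(f',t)_{w\varphi,p}$, use monotonicity of $\Omega$, and iterate. Alternatively, one can run the whole argument through the $K$-functional $K^\varphi_{k,r}$ via the equivalence stated in Section 2: there the comparison reduces to choosing a common smooth competitor $g\in\B_p^{k+r}$, and one never has to compare differences with variable steps. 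To rescue your direct argument you would need an additional lemma of the form $\|\mathcal W^{r+1}_{(k-1)h'(\cdot)}\Delta^{k-1}_{h'(\cdot)\varphi(\cdot)}g\|_p\le c\sup_{0<H\le c_1h}\|\mathcal W^{r+1}_{(k-1)H}\Delta^{k-1}_{H\varphi(\cdot)}g\|_p$ for measurable $h'(\cdot)\in(0,c_1h]$, and proving that is essentially as hard as the $K$-functional equivalence you are trying to avoid.
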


The following sharp Marchaud inequality was proved in \cite{dd}.

\begin{thm}[\mbox{\cite{dd}*{Theorem 7.5}}]
For $\a> -1/p$, $\b > -1/p$, $1<p<\infty$, $m \in\N$ and a weight $w$ defined in {\rm \ineq{wght}}, we have
\[
K_{m,\varphi}(f,t^m)_{w,p} \leq C t^m \left( \int_t^1 \frac{K_{m+1,\varphi}(f,u^{m+1})_{w,p}^q}{u^{mq+1}}\, du + E_m(f)_{w,p}^q \right)^{1/q}
\]
and
\[
K_{m,\varphi}(f,t^m)_{w,p} \leq C t^m \left( \sum_{n <1/t} n^{qm-1} E_n(f)_{w,p}^q \right)^{1/q} ,
\]
where $q = \min(2,p)$ and 
 $E_n(f)_{w,p}$ is the degree of   best weighted   approximation of $f$ by
polynomials from $\mathcal P_n$, \ie
$E_n(f)_{w,p}:=\inf\left\{\|(f-P_n)w\|_p \st  P_n\in\mathcal P_n \right\}$.
\end{thm}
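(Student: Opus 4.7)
The plan is to combine three ingredients: a realization of the weighted $K$-functional via polynomials of best approximation, weighted Bernstein--Markov inequalities for algebraic polynomials, and the type-$q$ (Littlewood--Paley) property of $L_p$ for $1<p<\infty$ applied to dyadic polynomial blocks. Throughout, $q=\min(2,p)$ and I write $w=\wab$ with $\a,\b>-1/p$.

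First, I would replace the $K$-functional by a realization. For $n = \lfloor 1/t \rfloor$, let $P_n^*\in\mathcal P_n$ be a polynomial of best weighted approximation to $f$. Then
\[
K_{m,\varphi}(f,t^m)_{w,p} \leq \|(f-P_n^*)w\|_p + t^m \|w\varphi^m (P_n^*)^{(m)}\|_p = E_n(f)_{w,p} + t^m \|w\varphi^m (P_n^*)^{(m)}\|_p .
\]
The first term is already dominated by $E_m(f)_{w,p}$ (monotonicity of $E_n$), so only the derivative term needs a sharp bound.

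Next, I would decompose $P_n^*$ dyadically. With $N_j=2^j$ and $N_J\sim n$, write $P_n^* = P_1^* + \sum_{j=0}^{J-1} R_j$, where $R_j := P_{N_{j+1}}^* - P_{N_j}^*\in\mathcal P_{N_{j+1}}$. By the triangle inequality, $\|R_j w\|_p \leq 2E_{N_j}(f)_{w,p}$. The weighted Bernstein--Markov inequality
\[
\|w\varphi^m Q^{(m)}\|_p \leq C N^m \|Qw\|_p, \qquad Q\in\mathcal P_{N+1},
\]
which is valid precisely in the range $\a,\b>-1/p$, then yields $\|w\varphi^m R_j^{(m)}\|_p \leq C N_j^m E_{N_j}(f)_{w,p}$.

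The crucial and most delicate step is to sum the blocks $R_j^{(m)}$ sharply in $L_p$. Instead of the trivial triangle inequality (which would only give $q=1$, the classical Marchaud rate), I would invoke a Littlewood--Paley / square-function estimate for lacunary sums of polynomial differences in weighted $L_p$:
\[
\Big\|w\varphi^m \sum_{j} R_j^{(m)}\Big\|_p \leq C \Big(\sum_j \|w\varphi^m R_j^{(m)}\|_p^q\Big)^{1/q},
\]
which reflects the type-$q$ property of $L_p$ together with a Marcinkiewicz-type multiplier theorem for Jacobi expansions. This is the main obstacle: one needs a square-function theorem adapted to the Jacobi/weighted setting, and the condition $\a,\b > -1/p$ is exactly what makes the underlying dyadic Jacobi projections bounded on $L_p(w)$. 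Combining the pieces gives
\[
t^m \|w\varphi^m (P_n^*)^{(m)}\|_p \leq C t^m \Big( \sum_{j:\,N_j\leq 1/t} N_j^{mq} E_{N_j}(f)_{w,p}^q \Big)^{1/q},
\]
and a standard dyadic-to-continuous comparison converts the dyadic sum into $\sum_{n<1/t} n^{qm-1} E_n(f)_{w,p}^q$, establishing the second inequality.

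Finally, to derive the $K_{m+1,\varphi}$-form from the $E_n$-form, apply the weighted Jackson direct theorem $E_n(f)_{w,p} \leq C\, K_{m+1,\varphi}(f,n^{-(m+1)})_{w,p}$ (for $n\geq m+1$) to each summand and use a discrete Hardy inequality to replace the sum by the integral $\int_t^1 K_{m+1,\varphi}(f,u^{m+1})_{w,p}^q\, u^{-mq-1}\,du$, absorbing the finitely many low-degree contributions into the additive $E_m(f)_{w,p}^q$ term. The difficulty is concentrated in the square-function estimate above; the Bernstein inequality and realization step are standard, and the passage from the discrete to the integral form is the usual Hardy inequality bookkeeping.
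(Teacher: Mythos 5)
The paper does not prove this statement; it is quoted verbatim from Dai and Ditzian \cite{dd}*{Theorem 7.5}, so there is no internal proof to compare against. Your outline is essentially the strategy of that cited source (whose title is precisely ``Littlewood--Paley theory and a sharp Marchaud inequality''): realization of the $K$-functional, dyadic polynomial blocks, weighted Bernstein, a Littlewood--Paley bound supplying the exponent $q=\min(2,p)$, then Jackson plus a discrete Hardy inequality to pass to the $K_{m+1,\varphi}$ form. Two caveats on the sketch as written. First, the inequality $\bigl\|\sum_j g_j\bigr\|_p\le C\bigl(\sum_j\|g_j\|_p^q\bigr)^{1/q}$ is false for arbitrary summands; it requires the $g_j$ to be spectrally localized in essentially disjoint dyadic frequency blocks, and $w\varphi^m R_j^{(m)}$, with $R_j$ a difference of best approximants, is neither a polynomial nor a spectral projection of anything, so the reduction to a Jacobi square-function theorem is not automatic --- Dai and Ditzian build the decomposition from de la Vall\'ee Poussin type operators precisely to secure this localization. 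You have correctly located the main obstacle, but invoking the square-function estimate at this point is close to invoking the theorem itself. Second, bounding $E_n(f)_{w,p}$ with $n\sim 1/t$ by $E_m(f)_{w,p}$ via monotonicity is too crude for the first inequality, whose additive term is $t^mE_m(f)_{w,p}$ rather than $E_m(f)_{w,p}$; the repair is standard (use $E_n(f)_{w,p}\le CK_{m+1,\varphi}(f,n^{-(m+1)})_{w,p}$ and note that the integral over $[t,2t]$ already dominates this, or the analogous dyadic-sum lower bound for the second inequality), but it should be said.
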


\begin{cor}
For   $1<p<\infty$, $r\in\N_0$, $m \in\N$ and $f\in\B^r_p$, we have
\[
\omega^{\varphi}_{m,r} (f^{(r)},t)_{p} \leq C t^m \left( \int_t^1\frac{\omega^{\varphi}_{m+1,r}(f^{(r)},u)_{p}^q z}{u^{mq+1}}\, du + E_m(f^{(r)})_{\varphi^r,p}^q \right)^{1/q}
\]
and
\[
\omega^{\varphi}_{m,r}(f^{(r)},t)_{p} \leq C t^m \left( \sum_{n <1/t} n^{qm-1} E_n(f^{(r)})_{\varphi^r,p}^q \right)^{1/q} ,
\]
where $q = \min(2,p)$.
\end{cor}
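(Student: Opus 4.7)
My plan is to derive both inequalities as direct consequences of the sharp Marchaud inequality quoted immediately above from \cite{dd}, combined with the modulus/$K$-functional equivalence supplied by Corollary 2.2.

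First I apply the quoted theorem of Dai--Ditzian to $f^{(r)}$ in place of $f$, taking the weight $w:=\wab$ with $\a=\b=r/2$, so that $w=\varphi^r$. Since $r\ge 0$ and $1<p<\infty$, the hypothesis $\a,\b>-1/p$ is satisfied. With $q=\min(2,p)$ this yields
$$
K_{m,\varphi}(f^{(r)},t^m)_{\varphi^r,p}\le C t^m\left(\int_t^1\frac{K_{m+1,\varphi}(f^{(r)},u^{m+1})_{\varphi^r,p}^q}{u^{mq+1}}\,du+E_m(f^{(r)})_{\varphi^r,p}^q\right)^{1/q}
$$
together with its discrete analogue, in which $\sum_{n<1/t}n^{qm-1}E_n(f^{(r)})_{\varphi^r,p}^q$ replaces the bracketed quantity. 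It then remains to transfer both sides from weighted $K$-functionals to the new moduli.

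For $0<t\le 2/m$, Corollary 2.2 replaces the left-hand side by $c\,\omega^\varphi_{m,r}(f^{(r)},t)_p$ up to absolute constants. In the sum version the right-hand side already involves the $E_n$'s, so the discrete statement follows at once. In the integral version, Corollary 2.2 also applies pointwise to the integrand on the subrange $u\in[t,2/(m+1)]$, giving $K_{m+1,\varphi}(f^{(r)},u^{m+1})_{\varphi^r,p}\le c\,\omega^\varphi_{m+1,r}(f^{(r)},u)_p$ there, which is exactly what is needed inside the integral.

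The hard part will be the tail $u\in(2/(m+1),1]$ of the integral, where Corollary 2.2 is no longer directly applicable. I plan to dispose of it by using the monotonicity in $u$ of both the $K$-functional and the new modulus, together with the convergence of $\int_{2/(m+1)}^1 u^{-mq-1}\,du$: the tail contribution reduces to a constant multiple of the value at the endpoint $u=2/(m+1)$, which is in turn bounded by $c\,\omega^\varphi_{m+1,r}(f^{(r)},2/(m+1))_p$ via the equivalence at that point and absorbed into the right-hand side at the cost of a constant depending only on $m,r,p$. A residual range $t>2/m$, if it needs to be treated at all, is handled separately by noting that $\omega^\varphi_{m,r}(f^{(r)},t)_p$ is uniformly bounded by $c\|f^{(r)}\varphi^r\|_p$ while $t^m E_m(f^{(r)})_{\varphi^r,p}^q$ stays bounded below on that range, so the inequality can be forced by enlarging the constant $C$.
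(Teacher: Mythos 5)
Your proposal is correct and follows exactly the route the paper intends: the corollary is obtained by applying the Dai--Ditzian theorem to $f^{(r)}$ with the Jacobi weight $w_{r/2,r/2}=\varphi^r$ (admissible since $r/2\ge 0>-1/p$) and then converting both sides via the equivalence $\omega^{\varphi}_{m,r}(f^{(r)},t)_p\asymp K_{m,\varphi}(f^{(r)},t^m)_{\varphi^r,p}$ from the corollary to Theorem 2.1. Your extra care with the ranges $u>2/(m+1)$ and $t>2/m$ is sound and only fills in routine details the paper leaves implicit.
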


The following sharp Jackson inequality was proved in \cite{ddt}.

\begin{thm}[\mbox{\cite{ddt}*{Theorem 6.2}}]
For $\a> -1/p$, $\b > -1/p$, $1<p<\infty$, $m \in\N$ and a weight $w$ defined in {\rm \ineq{wght}}, we have
\[
2^{-nm} \left( \sum_{j=j_0}^n 2^{mjs} E_{2^j}(f)^s_{w,p} \right)^{1/s} \leq C K_{m,\varphi}(f, 2^{-nm})_{w,p}
\]
and
\[
2^{-nm} \left( \sum_{j=j_0}^n 2^{mjs}  K_{m+1,\varphi}(f, 2^{-j(m+1)})_{w,p}^s    \right)^{1/s} \leq C K_{m,\varphi}(f, 2^{-nm})_{w,p} ,
\]
where $2^{j_0} \geq m$ and $s = \max(p,2)$.
\end{thm}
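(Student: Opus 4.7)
The plan is to prove the first inequality via a Littlewood--Paley block decomposition combined with weighted Bernstein--Markov estimates, and then to derive the second inequality from the first by inserting the standard Jackson inequality $E_n(f)_{w,p}\le C\,K_{m+1,\varphi}(f,n^{-(m+1)})_{w,p}$ term by term.

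I would begin by choosing, for each $j\ge j_0$, near-best polynomial approximants $P_j\in\mathcal P_{2^j}$ with $\|(f-P_j)w\|_p\le 2E_{2^j}(f)_{w,p}$ and setting $Q_j:=P_{j+1}-P_j\in\mathcal P_{2^{j+1}}$. This gives $\|Q_jw\|_p\le CE_{2^j}(f)_{w,p}$, while the weighted Bernstein--Markov inequality produces $\|w\varphi^m Q_j^{(m)}\|_p\le C\,2^{jm}\|Q_jw\|_p$. For any admissible $g$ in the K-functional infimum, the splitting $E_{2^j}(f)_{w,p}\le\|(f-g)w\|_p+E_{2^j}(g)_{w,p}$ together with Minkowski in $\ell^s$ reduces the task to proving, for $g\in\B_p^m$,
\[
\Bigl(\sum_{j=j_0}^n 2^{mjs}E_{2^j}(g)_{w,p}^{s}\Bigr)^{1/s}\le C\|w\varphi^m g^{(m)}\|_p,\qquad s=\max(p,2).
\]
This in turn I would obtain by decomposing $g$ into de la Vall\'ee Poussin blocks $\widetilde Q_j\in\mathcal P_{2^{j+1}}$, applying a sharp Littlewood--Paley square function estimate $\|(\sum_j|\widetilde Q_j|^2)^{1/2}w\|_p\sim\|gw\|_p$ valid in $\Lp(\wab)$ under $\a,\b>-1/p$, using the weighted Bernstein inequality to transfer the $2^{jm}$ weight from $\widetilde Q_j$ to $\varphi^m \widetilde Q_j^{(m)}$, and finally converting the $\ell^2$-sum to $\ell^s$ via the pointwise inclusion $\ell^2\subset\ell^p$ for $p\ge 2$ (and a dual $H^1$--$BMO$ style argument for $1<p\le 2$, where one gains from the near-orthogonality of the LP blocks in $L^2$).

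The main obstacle will be establishing the sharp Littlewood--Paley estimate with the exponent $s=\max(p,2)$ in the weighted Jacobi setting. While the basic square function equivalence is standard when $\a,\b>-1/p$, extracting the sharp $\ell^s$-norm requires either a Fefferman--Stein vector-valued maximal inequality for an appropriate Jacobi maximal operator, or a transference to the trigonometric setting where the analogous estimate is classical. The hypothesis $\a,\b>-1/p$ and $1<p<\infty$ enters precisely to ensure weighted $L^p$-boundedness of the Jacobi maximal operator and to make the corresponding multiplier theory applicable.
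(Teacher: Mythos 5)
First, a point of reference: the paper does not prove this statement at all --- it is quoted verbatim from Dai, Ditzian and Tikhonov \cite{ddt}*{Theorem 6.2}, so there is no internal proof to compare against. Judged on its own merits, your outline for the \emph{first} inequality is broadly in the spirit of the actual argument in the literature (Littlewood--Paley decomposition for Jacobi weights, weighted Bernstein inequalities, and the passage from $\ell^2$ to $\ell^s$ with $s=\max(p,2)$ --- which, incidentally, needs no $H^1$--$BMO$ duality: both cases follow from Minkowski's integral inequality, $\|\cdot\|_{\ell^s(L^p)}\le\|\cdot\|_{L^p(\ell^2)}$ for $s\ge p$ together with $\ell^2\hookrightarrow\ell^p$ when $p\ge2$). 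The reduction via $E_{2^j}(f)_{w,p}\le\|(f-g)w\|_p+E_{2^j}(g)_{w,p}$ is sound, and you are right that the sharp weighted square-function estimate is the genuinely hard step; but as written that step is deferred rather than proved, and you also need a reverse Bernstein (Nikolskii--Stechkin type) inequality for the blocks $\widetilde Q_i$, using that $\eta_{i+1}g-\eta_ig$ annihilates low-degree polynomials, in order to trade $2^{jm}E_{2^j}(g)$ for $\|w\varphi^m\widetilde Q_i^{(m)}\|_p$.

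The genuine error is in your derivation of the \emph{second} inequality from the first. The Jackson inequality $E_n(f)_{w,p}\le C\,K_{m+1,\varphi}(f,n^{-(m+1)})_{w,p}$ points the wrong way: substituting it into the first inequality replaces each term of the left-hand side by something \emph{larger}, so it yields a statement weaker than the first inequality, not the second one. What you would need for a term-by-term substitution is $K_{m+1,\varphi}(f,2^{-j(m+1)})_{w,p}\le C\,E_{2^j}(f)_{w,p}$, which is false in general (take $f$ a polynomial of degree between $m+1$ and $2^j$: the right side vanishes while the left does not). The standard repair is the realization
\[
K_{m+1,\varphi}(f,2^{-j(m+1)})_{w,p}\;\le\; C\Bigl(\|(f-P_{2^j})w\|_p+2^{-j(m+1)}\|w\varphi^{m+1}P_{2^j}^{(m+1)}\|_p\Bigr),
\]
with $P_{2^j}$ a near-best approximant. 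The first term is handled by the inequality you already have; the second requires writing $P_{2^j}=\sum_{i\le j}Q_i$, applying the weighted Bernstein inequality $\|w\varphi^{m+1}Q_i^{(m+1)}\|_p\le C2^{i(m+1)}\|Q_iw\|_p\le C2^{i(m+1)}E_{2^{i-1}}(f)_{w,p}$, and then a discrete Hardy inequality in $\ell^s$ to absorb the resulting sum $2^{-j}\sum_{i\le j}2^{i}\bigl(2^{im}E_{2^i}(f)_{w,p}\bigr)$ back into $\bigl(\sum_i 2^{ims}E_{2^i}(f)_{w,p}^s\bigr)^{1/s}$. Without this (or an equivalent direct argument as in \cite{ddt}), the second inequality does not follow from the first.
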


\begin{cor}
For   $1<p<\infty$, $r\in\N_0$, $m \in\N$ and $f\in\B^r_p$, we have
\[
2^{-nm} \left( \sum_{j=j_0}^n 2^{mjs} E_{2^j}(f^{(r)})^s_{\varphi^r,p} \right)^{1/s} \leq C  \omega^{\varphi}_{m,r}(f^{(r)},2^{-n})_{p}
\]
and
\[
2^{-nm} \left( \sum_{j=j_0}^n 2^{mjs}  \omega^{\varphi}_{m+1,r}(f^{(r)},2^{-j})_{p}^s  \right)^{1/s} \leq C \omega^{\varphi}_{m,r}(f^{(r)},2^{-n})_{p},
\]
where $2^{j_0} \geq m$ and $s = \max(p,2)$.
\end{cor}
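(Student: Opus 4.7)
The plan is to reduce the corollary to the sharp Jackson theorem of \cite{ddt} by specializing the weight and then translating $K$-functionals into new moduli via the equivalence stated earlier.

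Choose $w(x)=\varphi^r(x)=(1-x)^{r/2}(1+x)^{r/2}$, so that $w=w_{\a,\b}$ with $\a=\b=r/2\ge 0 > -1/p$; this is admissible in the cited theorem for every $1<p<\infty$. Since $f\in\B^r_p$, we have $\|f^{(r)}\varphi^r\|_p<\infty$, i.e. $f^{(r)}\in\Lp(\varphi^r)$, so the theorem of \cite{ddt} applies to the function $g:=f^{(r)}$ with this weight. This yields
\[
2^{-nm}\left(\sum_{j=j_0}^n 2^{mjs}E_{2^j}(f^{(r)})_{\varphi^r,p}^s\right)^{1/s}\le C\,K_{m,\varphi}\bigl(f^{(r)},2^{-nm}\bigr)_{\varphi^r,p}
\]
and
\[
2^{-nm}\left(\sum_{j=j_0}^n 2^{mjs}K_{m+1,\varphi}\bigl(f^{(r)},2^{-j(m+1)}\bigr)_{\varphi^r,p}^s\right)^{1/s}\le C\,K_{m,\varphi}\bigl(f^{(r)},2^{-nm}\bigr)_{\varphi^r,p}.
\]

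Next, apply the Corollary to the equivalence $\omega_{k,r}^\varphi(f^{(r)},t)_p\asymp K_{k,\varphi}(f^{(r)},t^k)_{\varphi^r,p}$ that appears just before the statement in question. With $k=m$ and $t=2^{-n}$ (so $t^k=2^{-nm}$, and note $2^{-n}\le 2/m$ since $2^{j_0}\ge m$), the right-hand side of each of the displays above is bounded by $C\,\omega_{m,r}^\varphi(f^{(r)},2^{-n})_p$, which produces the right-hand side of the corollary.

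For the first inequality this already finishes the argument. For the second inequality, apply the same equivalence term by term on the left: with $k=m+1$ and $t=2^{-j}$ (so $t^{m+1}=2^{-j(m+1)}$ and $2^{-j}\le 2/(m+1)$ whenever $j\ge j_0$), we obtain $\omega_{m+1,r}^\varphi(f^{(r)},2^{-j})_p\le C\,K_{m+1,\varphi}(f^{(r)},2^{-j(m+1)})_{\varphi^r,p}$, and inserting these bounds into the inner sum gives the desired estimate. There is no real obstacle here beyond checking the admissibility conditions ($\a,\b>-1/p$ and the range $t\le 2/k$ required by the equivalence); both are immediate under the hypotheses.
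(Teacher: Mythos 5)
Your proposal is correct and follows exactly the route the paper intends: the corollary is stated as an immediate consequence of the sharp Jackson theorem of Dai--Ditzian--Tikhonov specialized to the Jacobi weight $w=\varphi^r$ (so $\a=\b=r/2>-1/p$), combined with the equivalence $\omega_{k,r}^\varphi(f^{(r)},t)_p\asymp K_{k,\varphi}(f^{(r)},t^k)_{\varphi^r,p}$ from the earlier corollary. Your verification of the admissibility conditions and of the range restriction $t\le 2/k$ for the equivalence is exactly the bookkeeping the paper leaves implicit.
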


\begin{cor}
For   $1<p<\infty$, $r\in\N_0$, $m \in\N$ and $f\in\B^r_p$, we have
\[
t^m  \left(  \int_t^{1/m}        \frac{\omega^{\varphi}_{m+1,r}(f^{(r)},u)_{p}^s}{u^{ms+1}} \, du \right)^{1/s} \leq C \omega^{\varphi}_{m,r}(f^{(r)},t)_{p}, \quad 0<t\leq 1/m,
\]
where  $s = \max(p,2)$.
\end{cor}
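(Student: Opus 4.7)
The plan is to reduce the stated continuous integral inequality to the second (discrete) inequality in the preceding corollary via a routine dyadic decomposition of the integral. Fix $t\in(0,1/m]$, let $j_0:=\lceil\log_2 m\rceil$ (so that $2^{j_0}\ge m$ and $1/m\le 2^{-j_0+1}$), and pick $n\in\N$ with $2^{-n}\le t<2^{-n+1}$. This choice forces $n\ge j_0$, yields $t^{ms}\le 2^{ms}\cdot 2^{-nms}$, and -- because the supremum defining $\omega^\varphi_{m,r}(f^{(r)},\cdot)_p$ makes it non-decreasing in the last argument -- gives
\[
\omega^\varphi_{m,r}(f^{(r)},2^{-n})_p\le\omega^\varphi_{m,r}(f^{(r)},t)_p,
\]
which is the orientation needed on the right-hand side of the claim.

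Since $t\ge 2^{-n}$ and $1/m\le 2^{-j_0+1}$, the covering $[t,1/m]\subseteq\bigcup_{j=j_0}^{n}[2^{-j},2^{-j+1}]$ holds. On each block, monotonicity of $\omega^\varphi_{m+1,r}$ in its second argument together with the elementary estimate $\int_{2^{-j}}^{2^{-j+1}}u^{-ms-1}\,du\le C\,2^{jms}$ yield
\[
\int_{2^{-j}}^{2^{-j+1}}\frac{\omega^\varphi_{m+1,r}(f^{(r)},u)_p^s}{u^{ms+1}}\,du\le C\,2^{jms}\,\omega^\varphi_{m+1,r}(f^{(r)},2^{-j+1})_p^s.
\]
A standard doubling estimate $\omega^\varphi_{m+1,r}(f^{(r)},2\tau)_p\le C\,\omega^\varphi_{m+1,r}(f^{(r)},\tau)_p$, which is immediate from the $K$-functional equivalence recalled earlier in this section via $K^\varphi_{k,r}(f^{(r)},(2\tau)^k)_p\le 2^k K^\varphi_{k,r}(f^{(r)},\tau^k)_p$, then replaces the argument $2^{-j+1}$ by $2^{-j}$ up to a multiplicative constant; summing over $j_0\le j\le n$ produces
\[
\int_t^{1/m}\frac{\omega^\varphi_{m+1,r}(f^{(r)},u)_p^s}{u^{ms+1}}\,du\le C\sum_{j=j_0}^{n}2^{jms}\,\omega^\varphi_{m+1,r}(f^{(r)},2^{-j})_p^s.
\]

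Multiplying by $t^{ms}\le 2^{ms}\cdot 2^{-nms}$, taking the $1/s$-th root, and invoking the second inequality of the preceding corollary, we bound the left-hand side of the claim by $C\,\omega^\varphi_{m,r}(f^{(r)},2^{-n})_p$, which is in turn controlled by $C\,\omega^\varphi_{m,r}(f^{(r)},t)_p$ thanks to the monotonicity recorded in the first paragraph. The one subtle point -- and really the only technical step -- is to choose $n$ so that $2^{-n}\le t$ rather than $2^{-n}\ge t$, so that the two invocations of monotonicity (for $\omega^\varphi_{m,r}$ at the end, and for $\omega^\varphi_{m+1,r}$ on the dyadic blocks combined with doubling) both end up in the required direction; everything else is routine dyadic bookkeeping.
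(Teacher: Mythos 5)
Your argument is correct, and it is exactly the routine dyadic discretization that the paper has in mind: the corollary is stated there without proof as an immediate consequence of the second (discrete) inequality of the preceding corollary, which is precisely the reduction you carry out. The choice $2^{-n}\le t<2^{-n+1}$, the block-wise monotonicity plus doubling (via the $K$-functional equivalence), and the final application of the discrete sharp Jackson inequality all go through as you describe.
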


\section{Algebraic polynomial approximation in $\Lp$}

In \cite{kls}, we   proved the following results analogous to Theorems $\mathrm{\tD_r}$, $\mathrm{\tI_r}$ and
$\mathrm{\tC_r}$ (see also \cite{kls-2010}*{Theorem 3.2} for the inverse result for $p=\infty$).


\begin{theoremdirar} If $f\in\B^r_p$, $1\le p\le\infty$, then
\be \label{dirarineq}
E_n(f)_p\le c(k,r) n^{-r}\omega_{k,r}^\varphi(f^{(r)},n^{-1})_p,\quad n\ge k+r.
\ee
\end{theoremdirar}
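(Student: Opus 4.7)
The plan is to reduce the direct theorem to the K-functional characterization of $\omega^\varphi_{k,r}$ established earlier in the paper, combined with a classical Favard-type weighted Jackson inequality. The equivalence
\[
\omega^\varphi_{k,r}(f^{(r)},n^{-1})_p \sim K^\varphi_{k,r}(f^{(r)},n^{-k})_p
\]
lets me pick a near-best $g\in\B^{k+r}_p$ with
\[
\|(f^{(r)}-g^{(r)})\varphi^r\|_p + n^{-k}\|g^{(k+r)}\varphi^{k+r}\|_p \le c\,\omega^\varphi_{k,r}(f^{(r)},n^{-1})_p.
\]
Writing $f=(f-g)+g$ and using that a sum of two polynomials of degree $<n$ is again of degree $<n$, it suffices to produce polynomial approximants to $f-g$ and $g$ separately whose errors match the two terms on the left-hand side.

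The key technical input I would establish is the Favard-type inequality: for every $s\in\N$ and every $h\in\B^s_p$,
\[
E_n(h)_p \le c(s)\,n^{-s}\,\|h^{(s)}\varphi^s\|_p,\qquad n\ge s.
\]
This is an unweighted polynomial-approximation estimate in terms of the $\varphi^s$-weighted norm of $h^{(s)}$. A standard route is to use a polynomial partition of unity subordinate to the Chebyshev-type covering of $[-1,1]$ by intervals of length $\sim\varphi(x)/n$, apply a local polynomial Jackson estimate for $h$ on each such interval (gaining $s$ powers of $\varphi(x)/n$ against $h^{(s)}$ via Taylor remainders), and sum in $\Lp$; the factor $\varphi^s$ in the bound is precisely what absorbs the shrinking of the subintervals near $\pm 1$. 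An alternative derivation uses the weighted DT Jackson theorem with weight $w=\varphi^s$ (which is an admissible DT weight for $s\ge0$) applied to $h^{(s)}$, and then recovers an approximant to $h$ itself by $s$-fold integration, controlling the boundary terms by orthogonality against low-degree polynomials.

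With the Favard inequality in hand, I apply it with $s=r$ to $h=f-g$ and with $s=k+r$ to $h=g$, obtaining
\[
E_n(f-g)_p \le c(r)\,n^{-r}\,\|(f^{(r)}-g^{(r)})\varphi^r\|_p
\]
and
\[
E_n(g)_p \le c(k,r)\,n^{-(k+r)}\,\|g^{(k+r)}\varphi^{k+r}\|_p.
\]
Adding the two and invoking the K-functional bound from Step 1 yields
\[
E_n(f)_p \le c(k,r)\,n^{-r}\,\omega^\varphi_{k,r}(f^{(r)},n^{-1})_p,
\]
which is \eqref{dirarineq}.

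The main obstacle is the Favard-type inequality at the endpoints, where $\varphi^s$ degenerates and the local subintervals shrink like $n^{-2}$. One must construct the polynomial approximant so that the unweighted $\Lp$-error near $\pm 1$ is still dominated by the $\varphi^s$-weighted norm of $h^{(s)}$, without accumulating logarithmic factors in the summation. Once this ingredient is in place, the rest of the argument is a formal combination of the K-functional equivalence and the triangle inequality.
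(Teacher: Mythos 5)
Your argument is sound, and it is worth noting that this paper does not actually prove Theorem~$\mathrm{D_r}$ at all --- it is quoted from the companion paper \cite{kls}. Measured against what is available here, your proof is a legitimate and essentially self-contained derivation from two facts that the paper itself records: the equivalence $\omega^\varphi_{k,r}(f^{(r)},t)_p\sim K^\varphi_{k,r}(f^{(r)},t^k)_p$ for $0<t\le 2/k$ (Theorem~2.1, which covers $t=n^{-1}$, $n\ge k+r$), and the Favard-type estimate $E_n(h)_p\le c(s)n^{-s}\|h^{(s)}\varphi^s\|_p$ for $h\in\B^s_p$, $n\ge s$, which the authors state explicitly (in the remark immediately following Theorem~$\mathrm{D_r}$) as a consequence of the Ditzian--Totik theory. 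So the ingredient you flag as ``the main obstacle'' need not be reproved from scratch; citing \cite{dt} suffices, and your application of it with $s=r$ to $f-g$ and $s=k+r$ to $g$, followed by the triangle inequality, closes the argument on exactly the range $n\ge k+r$. The proof in \cite{kls} runs along closely related but slightly different lines: rather than splitting $f$ via a near-minimizer of the $K$-functional, it lifts the weighted Jackson estimate for $f^{(r)}$ (of the type $E_m(f^{(r)})_{\varphi^r,p}\le c\,\omega^\varphi_k(f^{(r)},m^{-1})_{\varphi^r,p}$) to $f$ through an inequality of the form $E_n(f)_p\le c\,n^{-r}E_{n-r}(f^{(r)})_{\varphi^r,p}$; the two routes are morally equivalent, yours being the more ``formal'' $K$-functional version. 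The one point you should make explicit is non-circularity: the bound $K^\varphi_{k,r}(f^{(r)},t^k)_p\le c\,\omega^\varphi_{k,r}(f^{(r)},t)_p$ must not itself be derived from the direct theorem. It is not --- in \cite{kls} and in \cite{dt}*{Chapter 6} it is obtained via averaged (Steklov-type) differences --- but a reader of your proof deserves to be told that.
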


Note that it follows from the DT estimates that if $f\in\B^r_p$, then
$$
E_n(f)_p\le c(r) n^{-r}\|f^{(r)}\varphi^r\|_p,\quad n\ge r,
$$
which is asymptotically weaker than \ineq{dirarineq}.

It is also known that   if, for some $r\ge1$,
$f^{(r)}\in \Lp[-1,1]$, $1\le p\le\infty$,  then
$$
E_n(f)_p\le c(k,r) n^{-r}\omega^\varphi_k(f^{(r)},n^{-1})_p,\quad n\ge k+r.
$$
But we should emphasize that here we have to assume that $f^{(r)}\in \Lp[-1,1]$, as the DT-moduli are not
well defined if the function is not in $\Lp[-1,1]$ and, clearly, $\omega_{k,r}^\varphi(f^{(r)},n^{-1})_p$ is smaller than $\omega^\varphi_k(f^{(r)},n^{-1})_p$.

\begin{theoreminvar}  Let $r\in\N_0$, $k\ge1$, and $N\in\N$, and let
$\phi:[0,1]\mapsto[0,\infty)$ be a nondecreasing function such that $\phi(0+)=0$ and
\[
\int_0^1r\frac{\phi(u)}{u^{r+1}}\,du<\infty.
\]
If $f\in \Lp[-1,1]$, $1\le p\le\infty$, and
\[
E_n(f)_p\le\phi\left( n^{-1} \right),\quad \text{for all} \quad n\ge N,
\]
then $f$ is a.e. identical with a function from $\B^r_p$, and
\begin{eqnarray*}
\omega^\varphi_{k,r}(f^{(r)},t)_p &\le& c(k,r)\int_0^tr\frac{\phi(u)}{u^{r+1}}\,du+c(k,r) t^k\int_t^1\frac{\phi(u)}{u^{k+r+1}}\,du\\
&& +c(N,k,r)t^kE_{k+r}(f)_p, \qquad t\in[0,1/2].
\end{eqnarray*}
If, in addition, $N\le k+r$, then
\[
\omega^\varphi_{k,r}(f^{(r)},t)_p\le c(k,r)\int_0^tr\frac{\phi(u)}{u^{r+1}}\,du+c(k,r)t^k\int_t^1\frac{\phi(u)}{u^{k+r+1}}\,du,\quad t\in[0,1/2].
\]
\end{theoreminvar}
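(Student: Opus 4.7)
The plan is to follow the classical Bernstein--Stechkin scheme adapted to the new moduli via their $K$-functional equivalent. First, pick near-best approximants $P_n\in\mathcal P_n$ with $\|f-P_n\|_p\le 2E_n(f)_p\le 2\phi(1/n)$ for $n\ge N$, fix $m_0$ so that $2^{m_0}\ge\max(N,k+r)$, and form the telescoping decomposition $f=P_{2^{m_0}}+\sum_{j\ge m_0}(P_{2^{j+1}}-P_{2^j})$, convergent in $\Lp[-1,1]$. The weighted Bernstein inequality $\|Q_n^{(s)}\varphi^s\|_p\le c n^s\|Q_n\|_p$ for $Q_n\in\mathcal P_n$ (see \cite{dt}) applied with $s=r$ yields $\|(P_{2^{j+1}}-P_{2^j})^{(r)}\varphi^r\|_p\le c\, 2^{jr}\phi(2^{-j})$. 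Under the hypothesis $\int_0^1\phi(u)/u^{r+1}\,du<\infty$, which is equivalent to $\sum_j 2^{jr}\phi(2^{-j})<\infty$, the termwise differentiated series converges in the $\varphi^r$-weighted $\Lp$-norm; this shows that $f$ agrees a.e.\ with a function in $\B^r_p$ and furnishes a series representation of $f^{(r)}$.

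For the modulus estimate, I would use the equivalence $\omega^\varphi_{k,r}(f^{(r)},t)_p\asymp K^\varphi_{k,r}(f^{(r)},t^k)_p$ stated earlier in the excerpt, so it suffices to exhibit a suitable test function $g$ in the $K$-functional. Given $t\in(0,1/2]$, select $m\ge m_0$ with $2^{-m}\le t<2^{-m+1}$ and take $g=P_{2^m}$. The approximation term is handled by the tail,
\[
\|(f^{(r)}-P_{2^m}^{(r)})\varphi^r\|_p\le c\sum_{j\ge m}2^{jr}\phi(2^{-j})\le c\int_0^t\frac{\phi(u)}{u^{r+1}}\,du .
\]
The smoothness term is handled by Bernstein with $s=k+r$ applied block by block to $P_{2^m}=P_{2^{m_0}}+\sum_{j=m_0}^{m-1}(P_{2^{j+1}}-P_{2^j})$, giving
\[
t^k\|P_{2^m}^{(k+r)}\varphi^{k+r}\|_p\le c\,t^k\sum_{j=m_0}^{m-1}2^{j(k+r)}\phi(2^{-j})+c\,t^k\|P_{2^{m_0}}^{(k+r)}\varphi^{k+r}\|_p ,
\]
where the dyadic sum converts (up to a constant, using monotonicity of $\phi$) into $t^k\int_t^1\phi(u)/u^{k+r+1}\,du$, while the boundary term is bounded by $c(N,k,r)\,t^k E_{k+r}(f)_p$ after writing $P_{2^{m_0}}=(P_{2^{m_0}}-P_{k+r})+P_{k+r}$ and applying Bernstein once more, since $\|P_{2^{m_0}}-P_{k+r}\|_p\le\|f-P_{k+r}\|_p+\|f-P_{2^{m_0}}\|_p\le cE_{k+r}(f)_p$.

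When $N\le k+r$, one may arrange the decomposition to start at a level $m_0$ with $2^{m_0}\le k+r$ (the approximation bound is available for all $n\ge N$, hence for $n=k+r$); then $P_{2^{m_0}}$ has degree at most $k+r$, its $(k+r)$-th derivative vanishes, and the boundary contribution disappears, yielding the sharper inequality. The main technical obstacle I foresee is the careful bookkeeping of the boundary block $P_{2^{m_0}}$---in particular producing the constant $c(N,k,r)$ that accommodates arbitrary $N$, and showing how the reduction $N\le k+r$ eliminates it---together with the clean matching of the dyadic sums to the two integrals on the right-hand side. The remaining ingredients (weighted Bernstein in $\Lp$, the equivalence $\omega^\varphi_{k,r}\asymp K^\varphi_{k,r}$, and the telescoping construction) are already in hand.
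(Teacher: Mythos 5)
The paper does not actually prove Theorem~$\mathrm{I_r}$ here; it is imported from \cite{kls}, so the only in-paper proof against which your argument can be compared is that of Theorem~\ref{Inverse}, the weighted-DT analogue. That proof takes a genuinely different route from yours: it subtracts a best approximant $P_{k}\in\mathcal P_{k}$ to form $F=f-P_{k}$, bounds the main-part modulus $\Omega_k^\varphi(F,t)_{w,p}$ by $ct^k\sum_{0<n\le 1/t}n^{k-1}E_n(F)_{w,p}$ using \cite{dt}*{Theorem 8.2.1}, then invokes \cite{dt}*{Theorem 6.3.1(a)} to differentiate (obtaining $F^{(r-1)}\in\AC_{\loc}$ and the bound on $\Omega_{k-r}^\varphi(F^{(r)},\cdot)$ by $\int_0^t\Omega_k^\varphi(F,\tau)\tau^{-r-1}d\tau$), and finally \cite{dt}*{(6.2.9)} to return from $\Omega$ to $\omega$. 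Your proposal is instead the classical Bernstein--Stechkin telescoping argument run by hand: dyadic near-best approximants, the weighted Bernstein inequality $\|Q_n^{(s)}\varphi^s\|_p\le cn^s\|Q_n\|_p$ applied block by block, and the equivalence $\omega^\varphi_{k,r}\asymp K^\varphi_{k,r}$ to convert the test-function estimates into a bound on the modulus. Both approaches are sound; yours is more self-contained and makes transparent where the boundary term $t^kE_{k+r}(f)_p$ comes from (the anchor block), whereas the paper's route hides the telescoping inside the prepackaged DT machinery. Note that both use the same device to produce the $E_{k+r}$ (resp.\ $E_{k}$) term, namely subtracting a polynomial of degree $<k+r$ (resp.\ $<k$) whose top-order derivative vanishes identically.

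One repairable slip: for the case $N\le k+r$ you propose to start the telescoping at a dyadic level with $N\le 2^{m_0}\le k+r$, but there need not be a power of two in $[N,k+r]$ (take $N=5$, $k+r=6$), and if $2^{m_0}<N$ the bound $\|f-P_{2^{m_0}}\|_p\le 2\phi(2^{-m_0})$ is unavailable. The fix is to anchor the telescoping at $P_{k+r}$ itself (a non-dyadic index): then $P_{k+r}^{(k+r)}\equiv0$, the hypothesis applies at $n=k+r\ge N$, and the first block $P_{2^{m_1}}-P_{k+r}$ with $k+r\le 2^{m_1}<2(k+r)$ is controlled by $cE_{k+r}(f)_p\le c\phi(1/(k+r))$, which is absorbed into $c(k,r)\,t^k\int_t^1\phi(u)u^{-k-r-1}du$. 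Two further details worth making explicit: for $p=\infty$ the limit function lands in $\B^r_\infty$ because the differentiated series converges uniformly and each term $(P_{2^{j+1}}-P_{2^j})^{(r)}\varphi^r$ vanishes at $\pm1$; and the leftover endpoint terms of the dyadic sums, such as $2^{mr}\phi(2^{-m})\le 2^rt^{-r}\phi(t)$, are absorbed into $t^k\int_t^1\phi(u)u^{-k-r-1}du$ rather than into $\int_0^t\phi(u)u^{-r-1}du$.
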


Taking $N =1$ and appropriately choosing the function $\phi$ we get the following corollary of Theorem~$\mathrm{I_r}$ in terms of the  degrees of approximation.

\begin{cor}
Given $1\leq p <  \infty$, $k\in\N$, $r\in\N_0$.
If
$$
\sum_{n=1}^\infty  r n^{r-1} E_n(f)_p <+\infty ,
$$
then $f$ is a.e. identical with a function from $\B^r_p$, and
\[
\omega_{k,r}^\varphi(f^{(r)},t)_p  \le  c\sum_{n > 1/t }r n^{r-1}E_n(f)_p    +ct^{k}\sum_{ 1\leq n\leq 1/t }n^{k+r-1}E_n(f)_p,\quad t\in\left[0,1/2\right].
\]
\end{cor}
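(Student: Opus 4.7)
The plan is to apply Theorem~$\mathrm{I_r}$ with $N=1$ to a step function $\phi$ built from the sequence $\{E_n(f)_p\}_{n\ge 1}$, and then rewrite the two integrals on the right-hand side of the resulting estimate as the required sums. First, I would set $\phi(0):=0$ and $\phi(u):=E_n(f)_p$ for $u\in(1/(n+1),1/n]$, $n\ge 1$. Since $E_n(f)_p$ is nonincreasing in $n$, this $\phi$ is nondecreasing on $[0,1]$; the limit $\phi(0+)=0$ follows from $E_n(f)_p\to 0$ (a consequence of the assumed summability when $r\ge 1$, and of $f\in\Lp$ plus density of polynomials when $r=0$); and by construction $E_n(f)_p=\phi(1/n)$ for every $n\ge 1$, so the hypothesis $E_n(f)_p\le\phi(n^{-1})$ of Theorem~$\mathrm{I_r}$ holds with $N=1$.

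Next, the integrability hypothesis is checked by the direct computation
\[
\int_0^1\frac{r\phi(u)}{u^{r+1}}\,du = \sum_{n=1}^\infty E_n(f)_p\bigl((n+1)^r-n^r\bigr),
\]
together with $(n+1)^r-n^r\asymp r n^{r-1}$, which shows that the integral is comparable to $\sum_{n\ge 1} r n^{r-1}E_n(f)_p<\infty$ (with both sides vanishing identically when $r=0$). Because $N=1\le k+r$, the sharper form of Theorem~$\mathrm{I_r}$ applies and yields $f$ a.e.\ equal to a function from $\B^r_p$, together with
\[
\omega_{k,r}^\varphi(f^{(r)},t)_p\le c(k,r)\int_0^t\frac{r\phi(u)}{u^{r+1}}\,du +c(k,r)t^k\int_t^1\frac{\phi(u)}{u^{k+r+1}}\,du .
\]

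The remaining task is to evaluate each of these integrals by splitting along the intervals $(1/(n+1),1/n]$ on which $\phi$ is constant, using $\int_{1/(n+1)}^{1/n} u^{-s-1}\,du = ((n+1)^s-n^s)/s$ with $s=r$ for the first integral and $s=k+r$ for the second, and invoking the asymptotic $(n+1)^s-n^s\asymp s n^{s-1}$ once more. Setting $M:=\lfloor 1/t\rfloor$, the full subintervals contained in $(0,1/M]$ contribute $\sum_{n>1/t} r n^{r-1}E_n(f)_p$ and $\sum_{1\le n\le 1/t}(k+r)n^{k+r-1}E_n(f)_p$ respectively (up to absolute constants), while the single ``boundary'' subinterval straddling $t$ is absorbed into the corresponding sum by the same asymptotics. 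Substituting into the estimate from Theorem~$\mathrm{I_r}$ gives the corollary. The only care required is the bookkeeping at this boundary subinterval and the tracking of the $r$- and $(k+r)$-dependent constants from $(n+1)^s-n^s\asymp s n^{s-1}$; the rest is a routine Riemann-sum comparison and no conceptual obstacle is expected.
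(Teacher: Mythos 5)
Your proof is correct and follows exactly the route the paper intends: the paper gives no details beyond ``taking $N=1$ and appropriately choosing the function $\phi$,'' and your step-function choice $\phi(u):=E_n(f)_p$ on $(1/(n+1),1/n]$, the verification of the hypotheses of Theorem~$\mathrm{I_r}$, and the Riemann-sum conversion of the two integrals (including absorbing the boundary subinterval into the second sum) is precisely the standard way to fill in that sketch.
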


\begin{theoremconar}
Let $r\in\N_0$, $\alpha > r$,  $k\ge1$ and $f\in\B^r_p$, $1\le p\le\infty$. If    
\[
\omega_{k,r}^\varphi(f^{(r)},t)_p\le  t^{\alpha-r},
\]
then
\[
E_n(f)_p\le c n^{-\alpha},\quad n\ge k+r.
\]
Conversely, if $r<\alpha<r+k$ and $f\in\Lp[-1,1]$ and
$$
E_n(f)_p\le n^{-\alpha},\quad n\ge N,
$$
then $f$ is a.e. identical with a function from $\B^r_p$, and
$$
\omega_{k,r}^\varphi(f^{(r)},t)_p\le
c(\alpha,k,r)t^{\alpha-r}+c(N,k,r)t^kE_{k+r}(f)_p,\quad t\in[0,1/2].
$$
If, in addition, $N\le k+r$, then
$$
\omega_{k,r}^\varphi(f^{(r)},t)_p\le
c(\alpha,k,r)t^{\alpha-r}.
$$
\end{theoremconar}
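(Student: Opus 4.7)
Both halves are essentially direct corollaries of the preceding Theorems~$\mathbf{D_r}$ and~$\mathbf{I_r}$, specialized to the power function $\phi(u)=u^{\alpha}$. The plan is to substitute the hypothesized rates into these theorems and verify that the resulting power-type integrals behave as expected under the assumptions $\alpha>r$ and $\alpha<k+r$.

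For the direct implication, the hypothesis $\omega_{k,r}^{\varphi}(f^{(r)},t)_p\le t^{\alpha-r}$ gives in particular $\omega_{k,r}^{\varphi}(f^{(r)},n^{-1})_p\le n^{-(\alpha-r)}$, so Theorem~$\mathbf{D_r}$ immediately yields, for $n\ge k+r$,
\[
E_n(f)_p\le c(k,r)\,n^{-r}\omega_{k,r}^{\varphi}(f^{(r)},n^{-1})_p\le c(k,r)\,n^{-\alpha}.
\]

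For the converse, set $\phi(u):=u^{\alpha}$. Since $\alpha>r$, one has $\int_0^1 r\,\phi(u)/u^{r+1}\,du = r\int_0^1 u^{\alpha-r-1}\,du<\infty$, so the integrability hypothesis of Theorem~$\mathbf{I_r}$ is met, and $E_n(f)_p\le n^{-\alpha}=\phi(n^{-1})$ for $n\ge N$ is exactly its hypothesis. Hence $f$ coincides a.e.\ with a function in $\B^r_p$, and the two integrals that appear in the conclusion of Theorem~$\mathbf{I_r}$ evaluate, for $t\in(0,1/2]$, to
\[
\int_0^t r\,\frac{u^{\alpha}}{u^{r+1}}\,du=\frac{r}{\alpha-r}\,t^{\alpha-r}
\qquad\text{and}\qquad
t^k\int_t^1\frac{u^{\alpha}}{u^{k+r+1}}\,du=\frac{t^{\alpha-r}-t^k}{k+r-\alpha}\le\frac{t^{\alpha-r}}{k+r-\alpha},
\]
where the second estimate uses $\alpha<k+r$. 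Substituting back produces precisely
\[
\omega_{k,r}^{\varphi}(f^{(r)},t)_p\le c(\alpha,k,r)\,t^{\alpha-r}+c(N,k,r)\,t^k E_{k+r}(f)_p.
\]

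For the final clause, if $N\le k+r$ then the rate hypothesis is valid at $n=k+r$, giving $E_{k+r}(f)_p\le(k+r)^{-\alpha}$, a constant depending only on $\alpha,k,r$. Moreover, $\alpha-r<k$ together with $t\in(0,1/2]\subset(0,1]$ gives $t^k\le t^{\alpha-r}$, so the residual term $c(N,k,r)t^k E_{k+r}(f)_p$ is absorbed into $c(\alpha,k,r)t^{\alpha-r}$. There is no genuine obstacle anywhere in the argument; the only care needed is to check that $\alpha>r$ guarantees the Dini-type integrability condition of Theorem~$\mathbf{I_r}$ and that $\alpha<k+r$ is precisely what forces the $t^k\int_t^1$ term to scale as $t^{\alpha-r}$ rather than as $t^k$.
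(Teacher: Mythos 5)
Your proposal is correct and follows the route the paper intends: the statement is set up precisely as the specialization of Theorems~$\mathbf{D_r}$ and~$\mathbf{I_r}$ to $\phi(u)=u^{\alpha}$, and your power-integral computations and the use of $\alpha>r$, $\alpha<k+r$ are exactly right. The only cosmetic remark is that for the final clause you could simply invoke the ``if, in addition, $N\le k+r$'' clause of Theorem~$\mathbf{I_r}$ (whose constants already depend only on $k,r$) instead of absorbing the residual term by hand, which leaves a nominal $c(N,k,r)$ that one must still bound over the finitely many admissible $N\le k+r$.
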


\section{Further characterizations}

In addition to characterizations in the previous section, we can also characterize certain smoothness classes of functions via the growth of certain weighted norms of their polynomials of best approximation.


\begin{thm}
Let $f\in \Lp[-1,1]$, $1\le p\le\infty$, $k\in\N$, $r\in\N_0$,  $r<\alpha<r+k$, and suppose that  $P_n$ denotes the $(n-1)$st degree polynomial  of best approximation of $f$ in $\Lp[-1,1]$.  Then
\be\label{pngrowth}
\|\varphi^{r+k}P_n^{(r+k)}\|_p\le cn^{r+k-\alpha},\quad n\ge r+k,
\ee
if and only if  $f$ is a.e. identical with a function from $\B^r_p$, and
\be\label{omega}
\omega_{k,r}^\varphi(f^{(r)},t)_p\le c t^{\alpha-r},\quad t>0.
\ee
\end{thm}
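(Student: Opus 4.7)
To prove the equivalence I use Theorem~$\mathbf{C_r}$ as a bridge: both \eqref{pngrowth} and \eqref{omega} reduce to the approximation rate $E_n(f)_p\le cn^{-\alpha}$, and the transition from/to \eqref{pngrowth} is effected by the Ditzian--Totik Bernstein inequality $\|\varphi^mQ^{(m)}\|_p\le cN^m\|Q\|_p$ (for $Q\in\mathcal P_N$) together with the $K$-functional equivalence for $\omega^\varphi_{k,r}$ recalled in Section~2.3.

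For $\eqref{omega}\Rightarrow\eqref{pngrowth}$, the direct part of Theorem~$\mathbf{C_r}$ supplies $E_n(f)_p\le cn^{-\alpha}$ for $n\ge k+r$, hence $\|P_{2n}-P_n\|_p\le 2E_n(f)_p\le cn^{-\alpha}$. Since $P_{2n}-P_n\in\mathcal P_{2n}$, the DT Bernstein inequality yields
\[
\|\varphi^{r+k}(P_{2n}-P_n)^{(r+k)}\|_p\le c\,n^{r+k-\alpha}.
\]
With $n_j:=(r+k)2^j$ and $J$ chosen so that $n_J\le n<n_{J+1}$, the telescoping
\[
P_n=P_{n_0}+\sum_{j=0}^{J-1}\bigl(P_{n_{j+1}}-P_{n_j}\bigr)+\bigl(P_n-P_{n_J}\bigr),
\]
together with $P_{n_0}^{(r+k)}\equiv 0$ (because $\deg P_{n_0}<r+k$) and the geometric bound $\sum_{j}n_j^{r+k-\alpha}\le c\,n^{r+k-\alpha}$ (valid since $r+k-\alpha>0$), produces \eqref{pngrowth}.

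For $\eqref{pngrowth}\Rightarrow\eqref{omega}$ the key step is to upgrade the derivative hypothesis on the $P_n$'s into an approximation rate for $f$ itself. Since $P_n\in\B^{k+r}_p$, choosing $g=P_n$ in the infimum defining $K^\varphi_{k,r}(P_n^{(r)},m^{-k})_p$ and invoking the $K$-functional equivalence yields
\[
\omega^\varphi_{k,r}(P_n^{(r)},m^{-1})_p\le c\,m^{-k}\|\varphi^{r+k}P_n^{(r+k)}\|_p\le c\,m^{-k}n^{r+k-\alpha}.
\]
Theorem~$\mathbf{D_r}$ applied to the smooth function $P_n$ then gives $E_m(P_n)_p\le c\,m^{-r-k}n^{r+k-\alpha}$ for $m\ge k+r$, and the choice $n=2m$ produces $E_m(P_{2m})_p\le c\,m^{-\alpha}$. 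Since $f\in\Lp[-1,1]$ forces $E_{2^Jm}(f)_p\to 0$, iterating the triangle inequality
\[
E_m(f)_p\le E_{2m}(f)_p+E_m(P_{2m})_p
\]
dyadically yields $E_m(f)_p\le c\,m^{-\alpha}$ for $m\ge k+r$. The converse part of Theorem~$\mathbf{C_r}$, taken with $N=k+r$ so that the auxiliary term $t^kE_{k+r}(f)_p$ is absorbed, then delivers $f\in\B^r_p$ (modulo modification on a null set) together with \eqref{omega}.

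The main obstacle lies in the reverse implication: a priori, the hypothesis \eqref{pngrowth} controls only a weighted high-order derivative of the best approximants and is compatible with $P_n$ exhibiting arbitrary low-frequency behaviour, so it does not immediately imply $f\in\B^r_p$ or any decay of $E_n(f)_p$. The bootstrapping above---combining the Jackson estimate of Theorem~$\mathbf{D_r}$ applied to $P_n$ itself with the $K$-functional realization $g=P_n$---closes this gap, after which the smoothness--approximation equivalence of Theorem~$\mathbf{C_r}$ finishes the proof.
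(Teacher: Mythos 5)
Your argument is correct, but it takes a genuinely different route from the paper's, which settles both implications essentially by citation. For $\eqref{omega}\Rightarrow\eqref{pngrowth}$ the paper does not pass through the decay of $E_n(f)_p$ at all: it invokes \cite{dt}*{Theorem 7.3.1}, namely $\|\varphi^{r+k}P_n^{(r+k)}\|_p\le cn^{k+r}\omega^\varphi_{k+r}(f,n^{-1})_p$, and combines it with the hierarchy inequality $\omega^\varphi_{k+r}(f,t)_p\le ct^r\omega^\varphi_{k,r}(f^{(r)},t)_p$ of \eqref{auxjan} (a consequence of Theorem~\ref{hierarchy}); your dyadic telescoping of $P_n$ together with the Bernstein inequality $\|\varphi^mQ^{(m)}\|_p\le cN^m\|Q\|_p$ is in effect a self-contained reproof of that DT theorem, and it is sound --- the geometric sum is dominated by its top term precisely because $r+k-\alpha>0$. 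For $\eqref{pngrowth}\Rightarrow\eqref{omega}$ the paper simply quotes \cite{dt}*{Theorem 7.3.2} to pass from \eqref{pngrowth} to $E_n(f)_p\le cn^{-\alpha}$ and then applies Theorem~$\mathrm{C_r}$; your bootstrap --- realizing $\omega^\varphi_{k,r}(P_n^{(r)},m^{-1})_p\le cm^{-k}\|\varphi^{r+k}P_n^{(r+k)}\|_p$ by taking $g=P_n$ in the $K$-functional, applying Theorem~$\mathrm{D_r}$ to the polynomial $P_{2m}$, and iterating $E_m(f)_p\le E_{2m}(f)_p+E_m(P_{2m})_p$ --- is again a correct in-house reproof of that DT theorem, after which you rejoin the paper at Theorem~$\mathrm{C_r}$. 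What your version buys is independence from Chapter 7 of \cite{dt}, using only the machinery already developed in the paper, at the cost of length; what the paper's version buys is a three-line proof. Two small points worth recording if you keep your version: applying Theorem~$\mathrm{D_r}$ and the $K$-functional equivalence to $P_n$ is legitimate because every polynomial lies in $\B^{k+r}_p$, and that equivalence is stated only for $0<t\le 2/k$, which $t=m^{-1}$ with $m\ge k+r$ satisfies.
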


\begin{proof} By virtue of \cite{dt}*{Theorem 7.3.1} we conclude that, for every $k\in\N$ and $r\in\N_0$,
\[
\|\varphi^{r+k}P_n^{(r+k)}\|_p\le c n^{k+r} \omega^\varphi_{k+r}(f,n^{-1})_p.
\]
Hence, if $f\in\B^r_p$ and \ineq{omega} is valid, then \ineq{pngrowth} follows immediately from the inequality
\be \label{auxjan}
\omega^\varphi_{k+r}(f,t)_p \leq c t^r \omega_{k,r}^\varphi(f^{(r)},t)_p
\ee
which is an immediate consequence of  Theorem~\ref{hierarchy}.

Conversely, if
\ineq{pngrowth} holds, then it follows by \cite{dt}*{Theorem 7.3.2} that $E_n(f)_p\le cn^{-\alpha}$, $n\ge r+k$.
Hence \ineq{omega} follows from Theorem~$\mathrm{C_r}$.
\end{proof}

%

We note that while the inequality \ineq{auxjan} cannot be reversed for a general function $f$, the following is an immediate consequence of Theorem~$\mathrm{C_r}$.

\begin{cor} \label{corequivjan}
Let $r\in\N_0$, $k\geq 1$, $f\in\Lp[-1,1]$, $1\leq p\leq \infty$,  $r<\alpha<r+k$.
If
\[
\omega_{r+k}^\varphi(f,t)_p \leq c t^\alpha ,\quad t>0 ,
\]
then $f$ is a.e. identical with a function from $\B^r_p$, and
\[
\omega^\varphi_{k,r}(f^{(r)},t)_p \leq c  t^{\alpha-r} ,\quad t>0.
\]
\end{cor}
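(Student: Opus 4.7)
The plan is to chain together two results already established in this paper, using Theorem~$\mathrm{C_0}$ as a bridge to convert the hypothesis into a degree-of-approximation bound, and then invoking Theorem~$\mathrm{C_r}$ to convert that bound back into a smoothness estimate for the new modulus.

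First I would apply the direct part of Theorem~$\mathrm{C_0}$ with the integer $k+r$ playing the role of $k$. Since by hypothesis $\omega^\varphi_{k+r}(f,t)_p \leq c\, t^\alpha$ and the exponent satisfies $0 < \alpha < k+r$ (because $\alpha > r \geq 0$ and $\alpha < r+k$), Theorem~$\mathrm{C_0}$ yields
\[
E_n(f)_p \leq c(k,r,\alpha)\, n^{-\alpha}, \quad n \geq k+r.
\]
Next I would feed this estimate into the converse part of Theorem~$\mathrm{C_r}$. The requirements are met: $r < \alpha < r+k$, $f\in\Lp[-1,1]$, and the bound on $E_n(f)_p$ holds starting from $N := k+r$. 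Since $N \leq k+r$, the cleaner form of Theorem~$\mathrm{C_r}$ applies and gives both that $f$ is a.e. identical with a function from $\B^r_p$ and that
\[
\omega^\varphi_{k,r}(f^{(r)},t)_p \leq c(\alpha,k,r)\, t^{\alpha-r}, \quad t\in[0,1/2].
\]
The extension to all $t>0$ is routine: for $t>1/2$, monotonicity of the modulus in $t$ combined with $(1/2)^{\alpha-r} \leq t^{\alpha-r}$ (which holds because $\alpha > r$) absorbs the change into a possibly larger constant.

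There is essentially no real obstacle in this argument; the only care required is in the bookkeeping of indices, namely verifying that $\alpha$ lies in the admissible range $(0,k+r)$ for the application of $\mathrm{C_0}$ and simultaneously in the admissible range $(r,r+k)$ for the application of $\mathrm{C_r}$, and that the threshold $N=k+r$ satisfies the hypothesis $N \leq k+r$ required for the sharper conclusion of $\mathrm{C_r}$ (without the additive $E_{k+r}(f)_p$ term).
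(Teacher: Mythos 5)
Your proposal is correct and is exactly the argument the paper intends: the corollary is stated there as an immediate consequence of Theorem~$\mathrm{C_r}$, obtained by first converting the hypothesis into $E_n(f)_p\le c\,n^{-\alpha}$ via the direct (Jackson) part of Theorem~$\mathrm{C_0}$ applied with $k+r$ in place of $k$, and then invoking the converse part of Theorem~$\mathrm{C_r}$ with $N=k+r$ so that the clean bound without the $t^kE_{k+r}(f)_p$ term applies. Your index bookkeeping and the routine extension from $t\in[0,1/2]$ to all $t>0$ are both fine.
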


\section{Further results for Weighted DT moduli} \label{DTlast}

The proofs (and therefore the results) of \cite{kls} may
be extended to the weighted DT moduli with weight $w$ which
satisfies the conditions of \cite{dt}*{Section 6.1}. So, in
particular, we have the hierarchy relations between the weighted
moduli of the function (of course, provided its derivative exists),
extending Theorem~\ref{hierarchy}.

\begin{thm}  Let $0<r<k$, and assume that $f$ is such
that $f^{(r-1)}$ is locally absolutely continuous in $(-1,1)$ and
$w\varphi^r f^{(r)}\in \Lp[-1,1]$, $1\le p\le\infty$. Then
\be\label{derivative} \omega_k^\varphi(f,t)_{w,p}\le
ct^r\omega_{k-r}^\varphi(f^{(r)},t)_{w\varphi^r,p},\quad t>0. \ee
\end{thm}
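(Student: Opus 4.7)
My plan is to extend the argument of Theorem~\ref{hierarchy} (\cite{kls}*{Theorem 7.1}), iterated $r$ times, from the boundary-adjusted moduli of \cite{kls} to the weighted DT setting. Since $\omega_k^\varphi(f,t)_{w,p}$ decomposes as a main-part modulus on $[-1+2k^2h^2,\,1-2k^2h^2]$ plus two endpoint pieces involving $\overrightarrow\Delta_h^k$ and $\overleftarrow\Delta_h^k$ at constant step $h\le 2k^2t^2$, I would handle the three pieces separately. The basic tool is the classical identity
\[
\Delta_\eta^k f(x)=\eta^r\int_{[0,1]^r}\Delta_\eta^{k-r}f^{(r)}\Bigl(x+\bigl(u_1+\cdots+u_r-\tfrac r2\bigr)\eta\Bigr)\,du_1\cdots du_r,
\]
valid whenever $f^{(r-1)}$ is absolutely continuous on the interval spanning the relevant arguments (which is guaranteed by the hypothesis), together with its one-sided analogues for $\overrightarrow\Delta_h^k$, $\overleftarrow\Delta_h^k$. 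The extracted factor $\eta^r$ is precisely what produces the $t^r$ on the right-hand side of \eqref{derivative}.

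For the main part, take $\eta=h\varphi(x)$ with $h\in(0,t]$, apply H\"older's inequality to the $r$-fold integral, integrate against $|w(x)|^p$, swap by Fubini, and change variables via $y=x+(u_1+\cdots+u_r-r/2)h\varphi(x)$. Because $\varphi(x)\ge ckh$ on the main-part interval and $r<k$, this map has Jacobian bounded above and below by positive constants, and the standard DT comparabilities $\varphi(x)\approx\varphi(y)$, $w(x)\approx w(y)$, $h\varphi(x)\approx h\varphi(y)$ transfer everything to the new variable. Extracting $(h\varphi(y))^r$ yields
\[
\bigl\|w\Delta_{h\varphi}^k f\bigr\|_{\Lp[-1+2k^2h^2,1-2k^2h^2]}\le c h^r\bigl\|w\varphi^r\Delta_{h\varphi}^{k-r}f^{(r)}\bigr\|_{\Lp[-1+c'h^2,1-c'h^2]}\le c h^r\omega_{k-r}^\varphi(f^{(r)},t)_{w\varphi^r,p}.
\]

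By symmetry only the endpoint piece at $-1$ needs a separate analysis. Running the same scheme with the \emph{constant} step $\eta=h\le 2k^2t^2$ and the forward-difference identity reduces, after H\"older, Fubini, and the unit-Jacobian change of variables $y=x+Uh$ with $U=u_1+\cdots+u_r\in[0,r]$, to the pointwise inequality
\[
h^r|w(y-Uh)|\le c\,t^r\varphi(y)^r|w(y)|,\qquad -1\le y-Uh,\ \ y+(k-r)h\le 1,\ \ 1+y\le Ct^2.
\]
This is the main technical obstacle, because the coarser factor $h^r$ (in place of $(h\varphi)^r$) does not automatically dominate $t^r\varphi(y)^r$ when $1+y$ is much smaller than $t^2$. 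I would handle it by splitting: if $1+y\ge C't^2$, then $\varphi(y)\ge ct$, $w(y-Uh)\approx w(y)$, and $h^r\le(2k^2t^2)^r$ absorbs into $t^r\varphi(y)^r$ immediately; if $1+y<C't^2$, the explicit form of $w_{\a,\b}$ near $-1$ together with the bound $Uh\le 2rk^2t^2$ supplies the required control, and the resulting upper bound is matched against the endpoint piece of $\omega_{k-r}^\varphi(f^{(r)},\cdot)_{w\varphi^r,p}$ (whose admissible step range, after rescaling $t$ by the factor $k/(k-r)$ and invoking the standard growth property $\omega(\lambda t)\le c(\lambda)\omega(t)$, covers our $h$). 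Summing the three contributions and taking the supremum over $h\in(0,t]$ then yields \eqref{derivative}.
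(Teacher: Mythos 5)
Your treatment of the main-part term is sound --- it amounts to a re-derivation, iterated $r$ times, of the inequality $\Omega_k^\varphi(f,t)_{w,p}\le ct\,\Omega_{k-1}^\varphi(f',t)_{w\varphi,p}$ of \cite{dt}*{(6.3.2)} --- but the endpoint analysis, which you correctly single out as the main obstacle, has a genuine gap. The pointwise inequality you reduce to,
\[
h^r|w(y-Uh)|\le c\,t^r\varphi(y)^r|w(y)|,
\]
is false in precisely the regime $1+y\ll t^2$ that you flag, and the explicit form of $\wab$ cannot rescue it: already for $w\equiv 1$ it reads $h\le c\,t\sqrt{1+y}$, which fails for $h\asymp t^2$ and $1+y\asymp t^4$. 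What actually saves the endpoint estimate is not any pointwise bound but the integration over $u\in[0,1]^r$: one must keep the singular factor $\varphi(x+Uh)^{-r}\le C(Uh)^{-r/2}$ inside the $u$-integral and use that the distribution of $U=u_1+\cdots+u_r$ has density $\asymp U^{r-1}$ near $0$, so that $\int_{[0,1]^r}(Uh)^{-r/2}\,du\le Ch^{-r/2}$ and $h^r\cdot h^{-r/2}=h^{r/2}\le Ct^r$; for $p<\infty$ this requires a weighted H\"older argument rather than plain Jensen. Without this device the splitting you describe does not close. A secondary issue is that your argument is tied to the Jacobi weights $\wab$, whereas the theorem is asserted for general weights satisfying the conditions of \cite{dt}*{Section 6.1}.

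You should also be aware that the paper's proof bypasses the endpoint terms entirely and is much shorter: by \cite{dt}*{(6.2.9)} the full weighted modulus is dominated by $c\int_0^t\Omega_k^\varphi(f,\tau)_{w,p}\,\tau^{-1}d\tau$, i.e.\ by an integral of the main-part modulus alone; combining this with \cite{dt}*{(6.3.2)} and the monotonicity of $\Omega_{k-1}^\varphi(f',\cdot)_{w\varphi,p}$ gives $\omega_k^\varphi(f,t)_{w,p}\le ct\,\omega_{k-1}^\varphi(f',t)_{w\varphi,p}$ in three lines, and iterating $r$ times finishes the proof. If you want a self-contained direct argument, the $u$-integration device above is the missing ingredient; otherwise the reduction through the main-part modulus is the efficient route.
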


\begin{rem} The inequality \eqref{derivative} extends
\cite{dt}*{Corollary 6.3.3(b)}, as we do not require the condition of
$\beta(c)\ge1$, for $c=\pm1$, that appears there.
\end{rem}

\begin{proof}
Recall that the main part modulus $\Omega_k^\varphi$ is defined in \cite{dt}*{(8.1.2)} as follows:
\[
\Omega_k^\varphi(f,t)_{w,p}:=\sup_{0<h\leq t}
\norm{w\Delta_{h\varphi}^kf}{\Lp[-1+2k^2h^2 , -1+2k^2h^2]}.
\]
Then, \cite{dt}*{(6.2.9)} implies that
\[
\omega^\varphi_k(f ,t)_{w,p}\le
c\int_0^t(\Omega_k^\varphi(f ,\tau)_{w,p}\,/\tau)\,d\tau.
\]
Also, by \cite{dt}*{(6.3.2)}, we have
\[
\Omega_k^\varphi(f ,t)_{w,p}\le
ct\Omega_{k-1}^\varphi(f',t)_{w\varphi,p}.
\]
Hence,
\begin{align*}
\omega^\varphi_k(f ,t)_{w,p}
&\le
c\int_0^t\Omega_{k-1}^\varphi(f',\tau)_{w\varphi,p}\,d\tau\\
&\le
c t\Omega_{k-1}^\varphi(f',t)_{w\varphi,p}\le
ct \omega_{k-1}^\varphi(f',t)_{w\varphi,p},
\end{align*}
where for the second inequality we used the monotonicity of
$\Omega_{k-1}^\varphi(f',t)_{w\varphi,p}\,$, and for the
third we applied \cite{dt}*{(6.2.9)}.
Applying this inequality $r$ times we get the desired estimate.
\end{proof}

For the Jacobi weights $w=w_{\a,\b}$ defined in \ineq{wght},
it was proved by Ky \cite{ky}*{Theorem 4} (see also Luther and Russo \cite{lr}*{Corollary 2.2}) that there is an $n_0\in\N$ such that
\be \label{luther}
E_n(f)_{w,p}\le c\omega_k^\varphi(f,n^{-1})_{w,p},\quad n\ge n_0.
\ee

Thus, by \eqref{derivative}, we have the following Jackson-type result.

\begin{thm} Let $0<r<k$ and assume that $f^{(r-1)}$ is locally absolutely
continuous in $(-1,1)$ and $w\varphi^r f^{(r)}\in \Lp[-1,1]$, $1\le p\le\infty$. Then
\[
E_n(f)_{w,p}\le
cn^{-r}\omega_{k-r}^\varphi(f^{(r)},n^{-1})_{w\varphi^r,p},\quad n\ge n_0.
\]
\end{thm}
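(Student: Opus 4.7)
The proof is essentially a two-line corollary of the two ingredients just assembled in this section. First I would invoke the Ky--Luther--Russo Jackson inequality \eqref{luther}, which under the assumption that $f\in\Lp(w)$ yields
\[
E_n(f)_{w,p}\le c\,\omega_k^\varphi(f,n^{-1})_{w,p},\quad n\ge n_0.
\]
Then I would apply the hierarchy inequality \eqref{derivative}, proved as the main theorem of this section, with $t=n^{-1}$ to the right-hand side to get
\[
\omega_k^\varphi(f,n^{-1})_{w,p}\le c\,n^{-r}\omega_{k-r}^\varphi(f^{(r)},n^{-1})_{w\varphi^r,p}.
\]
Concatenating the two displays produces the claimed Jackson-type bound.

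Before chaining them, I would confirm that the hypotheses of both auxiliary results are in force. Since $\a,\b\ge 0$, the weight $w$ is bounded on $[-1,1]$, and the local absolute continuity of $f^{(r-1)}$ together with $w\varphi^r f^{(r)}\in\Lp$ implies, by $r$-fold integration and the standard Hardy-type bounds that underlie the Ditzian--Totik theory, that $wf\in\Lp[-1,1]$; in particular $f\in\Lp(w)$, which is what \eqref{luther} requires. The conditions $0<r<k$, $f^{(r-1)}\in\AC_{\loc}(-1,1)$, and $w\varphi^r f^{(r)}\in\Lp$ are precisely those demanded by \eqref{derivative}, so that result applies directly and produces the $n^{-r}$ factor.

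I do not expect any real obstacle: all of the work has been done, either in the proof of \eqref{derivative} above or in the quoted theorem of Ky. The only care needed is in the preliminary $\Lp(w)$-membership check for $f$ and in tracking the admissible range of $n$; since \eqref{derivative} is valid for every $t>0$, the combined estimate inherits the threshold $n\ge n_0$ from \eqref{luther} without further restriction.
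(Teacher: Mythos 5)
Your proposal is correct and matches the paper's argument exactly: the paper derives this theorem by combining the Ky/Luther--Russo Jackson inequality \eqref{luther} with the hierarchy inequality \eqref{derivative} at $t=n^{-1}$, which is precisely your chain. Your additional check that $wf\in\Lp[-1,1]$ is a reasonable piece of diligence the paper leaves implicit, but it does not change the route.
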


It was proved in \cite{dt}*{Theorem 8.2.4} that
\[
\omega_k^\varphi(f,t)_{w,p}\le ct^k\sum_{0<n\le1/t}n^{k-1}E_n(f)_{w,p},\quad t\le t_0 .
\]
This readily implies that, if $0<\alpha < k$ and $E_n(f)_{w,p}\le  n^{-\alpha}$, for $n\ge1$, then
\[
\omega_k^\varphi(f,t)_{w,p}\le c t^\alpha,\quad t\le t_0.
\]

In fact, it is possible to prove the following more general result.

\begin{thm}\label{Inverse} Let $0 \leq r<\alpha <k$, and let $f$ be such that $wf\in\Lp[-1,1]$, $1\le p\le\infty$. If, for some some $N\in\N$,
\be\label{Aleph}
E_n(f)_{w,p}\le n^{-\alpha}, \quad n\ge N,
\ee
then $f$ is a.e. identical with a function  that has a locally absolutely continuous derivative $f^{(r-1)}$ in $(-1,1)$, and
\[
\omega_{k-r}^\varphi(f^{(r)},t)_{w\varphi^r,p}\le
c(w,\alpha,k, r)t^{\alpha-r}+c(w,N,k,r)t^{k-r} E_{k}(f)_{w,p},\quad t>0.
\]
In particular, if $N\leq k$, then
\[
\omega_{k-r}^\varphi(f^{(r)},t)_{w\varphi^r,p}\le
c(w,\alpha,k,r)t^{\alpha-r},\quad t>0.
\]
\end{thm}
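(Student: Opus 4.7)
The plan is to follow the classical Bernstein--Stechkin scheme, adapted to the weighted DT setting. Let $P_n\in\mathcal P_n$ be a polynomial of best weighted approximation of $f$, so that $\|w(f-P_n)\|_p=E_n(f)_{w,p}$, and set $Q_j:=P_{2^{j+1}}-P_{2^j}$. The main technical tool is the weighted Bernstein-type inequality
\[
\|w\varphi^m P^{(m)}\|_p\le c\, n^m\|wP\|_p,\qquad P\in\mathcal P_n,
\]
valid for the Jacobi-type weights allowed in \cite{dt}*{Chapter 6}. Applied to $Q_j$, together with $\|wQ_j\|_p\le E_{2^{j}}(f)_{w,p}+E_{2^{j+1}}(f)_{w,p}$, this gives, for $2^j\ge N$,
\[
\|w\varphi^r Q_j^{(r)}\|_p\le c\, 2^{jr}\cdot 2^{-j\alpha}=c\, 2^{-j(\alpha-r)},
\]
which is summable since $\alpha>r$. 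Hence $\sum_j Q_j^{(r)}$ converges in the norm $\|w\varphi^r(\cdot)\|_p$, and a standard argument shows that $f$ is a.e. identical with a function whose $(r-1)$st derivative is locally absolutely continuous in $(-1,1)$ and for which $w\varphi^r f^{(r)}\in\Lp[-1,1]$.

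Next I would invoke the $K$-functional equivalence \cite{dt}*{Theorem 6.1.1}: for $0<t\le t_0$,
\[
\omega_{k-r}^\varphi(f^{(r)},t)_{w\varphi^r,p}\le c\inf_{g}\bigl(\|w\varphi^r(f^{(r)}-g)\|_p+t^{k-r}\|w\varphi^k g^{(k-r)}\|_p\bigr).
\]
Choose $g:=P_n^{(r)}$ with $n\sim 1/t$; since $g^{(k-r)}=P_n^{(k)}$, two quantities must be estimated. The first is controlled by the tail of the telescoping series:
\[
\|w\varphi^r(f^{(r)}-P_n^{(r)})\|_p\le\sum_{2^{j+1}>n}\|w\varphi^r Q_j^{(r)}\|_p\le c\, n^{r-\alpha}\le c\, t^{\alpha-r}.
\]

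For the second quantity $t^{k-r}\|w\varphi^k P_n^{(k)}\|_p$, I would write $P_n$ as $P_k$ plus a dyadic telescoping sum, use $P_k^{(k)}=0$, and apply Bernstein to obtain
\[
\|w\varphi^k P_n^{(k)}\|_p\le c\sum_{2^{j_0}\le 2^j\le n}2^{jk}\|wQ_j\|_p+c(k)E_k(f)_{w,p},
\]
where $2^{j_0}\sim k$ and the base term $c(k)E_k(f)_{w,p}$ comes from a further Bernstein application to $P_{2^{j_0}}-P_k$. Splitting the remaining sum at $2^j\sim N$ and using monotonicity of $E_n(f)_{w,p}$, the low-frequency part is bounded by $c(N,k)E_k(f)_{w,p}$, while the high-frequency part sums to $c\, n^{k-\alpha}$. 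Multiplying by $t^{k-r}$ with $t\sim 1/n$ yields the two contributions $c\, t^{\alpha-r}$ and $c(N,k,r)\, t^{k-r}E_k(f)_{w,p}$ in the statement. The case $t>t_0$ is settled by monotonicity of the modulus in $t$, and when $N\le k$ the low-frequency part is empty and the $E_k$-term disappears.

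The main obstacle I anticipate is the careful verification of the weighted Bernstein inequality with the shifted weight $w\varphi^r$ near the endpoints $\pm 1$: although $w_{\alpha,\beta}\varphi^r=w_{\alpha+r/2,\beta+r/2}$ is still a Jacobi weight in the admissible class of \cite{dt}*{Section 6.1}, the bookkeeping must propagate the dependence on $r$, $k$ and $N$ correctly. A secondary delicate point is combining the bottom of the dyadic scale with the base polynomial so that only the single factor $E_k(f)_{w,p}$ survives in the final estimate, which relies essentially on the identity $P_k^{(k)}=0$.
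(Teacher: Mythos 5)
Your argument is correct in outline, but it takes a genuinely different route from the paper's. The paper never manipulates the best approximants beyond the single subtraction $F:=f-P_{k}$; it then works entirely at the level of the main-part modulus, chaining three ready-made facts from \cite{dt}: Theorem 8.2.1 to get $\Omega_k^\varphi(F,t)_{w,p}\le ct^k\sum_{0<n\le 1/t}n^{k-1}E_n(F)_{w,p}$, Theorem 6.3.1(a) to convert the finiteness of $\int_0^1\Omega_k^\varphi(F,\tau)_{w,p}\,\tau^{-r-1}d\tau$ into both the local absolute continuity of $F^{(r-1)}$ and the bound on $\Omega_{k-r}^\varphi(F^{(r)},t)_{w\varphi^r,p}$, and (6.2.9) to return from $\Omega$ to $\omega$; the $E_k(f)_{w,p}$ term enters there because $E_n(F)_{w,p}\le E_k(f)_{w,p}$ for \emph{every} $n$, which is exactly the role your identity $P_k^{(k)}=0$ plays. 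Your proof is the classical Bernstein--Stechkin realization argument (dyadic blocks, weighted Bernstein inequality, the $K$-functional equivalence of \cite{dt}*{Theorem 6.1.1}); it is more self-contained and makes the mechanism visible, at the price of re-deriving what Theorem 6.3.1(a) already packages. Three points to tighten: (i) the tail estimate $\|w\varphi^r(f^{(r)}-P_n^{(r)})\|_p\le cn^{r-\alpha}$ with an $N$-independent constant is only valid once all blocks in the tail satisfy $2^j\ge N$; for $1/t<N$ you must perform on this first quantity the same split at $2^j\sim N$ that you carry out for $\|w\varphi^kP_n^{(k)}\|_p$, which is precisely where the $t^{k-r}E_k(f)_{w,p}$ term is also needed in the first summand; (ii) when $N\le k$ the base term $c(k)E_k(f)_{w,p}$ coming from $(P_{2^{j_0}}-P_k)^{(k)}$ does not literally disappear, but is absorbed into $ct^{\alpha-r}$ since then $E_k(f)_{w,p}\le k^{-\alpha}$ and $t^{k-r}\le c(k)t^{\alpha-r}$ for $t\le 1/k$; (iii) monotonicity of the modulus alone does not give an upper bound for large $t$ --- fix $n\sim\max(N,k)$ there instead --- though this technicality is shared by the paper's proof, whose ingredients from \cite{dt} are likewise stated for $t\le t_0$.
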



\begin{proof} Let $P_{k}\in\mathcal P_{k}$ be a polynomial of  best approximation to $f$ in the
weighted norm $\|w\cdot\|_p$, and set $F:=f-P_{k}$. Then
  $E_n(F)_{w,p}= \norm{wF}{p} = E_{k}(f)_{w,p}$, $n < k$, and
$E_n(F)_{w,p}=E_n(f)_{w,p}$, $n \geq k$. Hence, in particular, $E_n(F)_{w,p} \leq E_{k}(f)_{w,p}$, for all $n\in\N$.

Combining \cite{dt}*{Theorem 8.2.1} and \eqref{Aleph},
we obtain
\begin{eqnarray*}
\Omega_k^\varphi(F,t)_{w,p} &\le&  c t^k\sum_{0<n\le1/t}n^{k-1}E_n(F)_{w,p} \\
&\le& c t^k N^k E_{k}(f)_{w,p} + c t^k\sum_{N \leq n\le1/t}n^{k-1}E_n(f)_{w,p} \\
& \leq & c(N) t^k  E_{k}(f)_{w,p} + c t^\alpha  , \quad t>0.
\end{eqnarray*}
Hence,
\[
\int_0^1(\Omega_k^\varphi(F,\tau)_{w,p}\,/\tau^{r+1})\,d\tau\le
 \int_0^1 \left( c\tau^{\alpha-r-1} +  c(N) t^{k-r-1}  E_{k}(f)_{w,p} \right)    \,d\tau<\infty,
\]
which, by \cite{dt}*{Theorem 6.3.1(a)}, implies that $F^{(r-1)}$ is
locally absolutely continuous in $(-1,1)$ and
\begin{eqnarray*}
\Omega_{k-r}^\varphi(F^{(r)},t)_{w\varphi^r,p} & \le & c\int_0^t(\Omega_k^\varphi(F,\tau)_{w,p}\,/\tau^{r+1})\,d\tau \\
&\le& c\int_0^t \left( c\tau^{\alpha-r-1} +  c(N) \tau^{k-r-1}  E_{k}(f)_{w,p} \right)\,d\tau \\
& \le & c t^{\alpha-r} + c(N) t^{k-r} E_{k}(f)_{w,p} ,\quad t>0.
\end{eqnarray*}
Finally, taking into account that
\[
\omega_{k-r}^\varphi(F^{(r)},t)_{w\varphi^r,p}=
\omega_{k-r}^\varphi(f^{(r)},t)_{w\varphi^r,p} , \quad t>0 ,
\]
we apply \cite{dt}*{(6.2.9)}  to get
\begin{eqnarray*}
\omega_{k-r}^\varphi(f^{(r)},t)_{w\varphi^r,p} & = & \omega_{k-r}^\varphi(F^{(r)},t)_{w\varphi^r,p} \\
&\le&
c\int_0^t(\Omega_{k-r}^\varphi(F^{(r)},\tau)_{w\varphi^r,p}\,/\tau)\,d\tau \\
&\le& c t^{\alpha-r} + c(N) t^{k-r} E_{k}(f)_{w,p} .
\end{eqnarray*}
This completes the proof.
\end{proof}

Finally, we have the following result analogous to Corollary~\ref{corequivjan} which immediately follows from \ineq{luther} and Theorem~\ref{Inverse}.

\begin{thm}
Let   $wf\in\Lp[-1,1]$, $1\leq p\leq \infty$ and $0\leq r<\alpha<k$.
If
\[
\omega_{k}^\varphi(f,t)_{w,p}\le c t^{\alpha},\quad t>0,
\]
then $f$ is a.e. identical with a function  that has a locally absolutely continuous derivative $f^{(r-1)}$ in $(-1,1)$, and
\[
\omega_{k-r}^\varphi(f^{(r)},t)_{w\varphi^r,p}\le c t^{\alpha-r},\quad t>0.
\]
\end{thm}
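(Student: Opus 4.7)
The plan is to chain together the two ingredients indicated at the end of the section: the weighted Jackson estimate \eqref{luther} of Ky (and Luther--Russo), and Theorem~\ref{Inverse}. First, from the hypothesis $\omega_k^\varphi(f,t)_{w,p}\le ct^\alpha$ and \eqref{luther}, one immediately obtains
\[
E_n(f)_{w,p}\le c\,\omega_k^\varphi(f,n^{-1})_{w,p}\le c\,n^{-\alpha},\qquad n\ge n_0,
\]
with some $n_0\in\N$ furnished by \eqref{luther}. Thus the decay hypothesis \eqref{Aleph} of Theorem~\ref{Inverse} is satisfied with $N=n_0$.

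Next, apply Theorem~\ref{Inverse} (whose hypotheses $0\le r<\alpha<k$ and $wf\in\Lp[-1,1]$ are already assumed). It yields that $f$ is a.e.\ identical with a function whose derivative $f^{(r-1)}$ is locally absolutely continuous in $(-1,1)$, together with the estimate
\[
\omega_{k-r}^\varphi(f^{(r)},t)_{w\varphi^r,p}\le c(w,\alpha,k,r)\,t^{\alpha-r}+c(w,n_0,k,r)\,t^{k-r}E_k(f)_{w,p},\qquad t>0.
\]

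Finally, the extra term must be absorbed into the leading term. Since $k-r>\alpha-r$, for $t\in(0,1]$ we have $t^{k-r}\le t^{\alpha-r}$, so that
\[
c(w,n_0,k,r)\,t^{k-r}E_k(f)_{w,p}\le c(w,n_0,k,r)\,\|wf\|_p\,t^{\alpha-r}.
\]
For $t>1$, the modulus $\omega_{k-r}^\varphi(f^{(r)},t)_{w\varphi^r,p}$ is dominated by a constant multiple of $\|wf\|_p$ (this is standard, exactly as in the unweighted case), whereas $t^{\alpha-r}>1$, so the inequality is trivially true there. Combining these two ranges yields the desired estimate
\[
\omega_{k-r}^\varphi(f^{(r)},t)_{w\varphi^r,p}\le c\,t^{\alpha-r},\qquad t>0.
\]
The main (and only nontrivial) step is the verification in the middle paragraph, namely that Theorem~\ref{Inverse} applies with $N=n_0$ from \eqref{luther}; the rest is bookkeeping about which range of $t$ one is in, and in particular there is no obstacle arising from $n_0$ possibly exceeding $k$, because the resulting $E_k(f)_{w,p}$-term is absorbed as above.
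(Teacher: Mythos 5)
Your proof is correct and takes exactly the route the paper intends: the paper gives no separate argument, saying only that the theorem ``immediately follows from \eqref{luther} and Theorem~\ref{Inverse}'', which is precisely your chain $\omega_k^\varphi(f,t)_{w,p}\le ct^\alpha \Rightarrow E_n(f)_{w,p}\le cn^{-\alpha}$ for $n\ge n_0 \Rightarrow$ Theorem~\ref{Inverse} with $N=n_0$. The absorption of the residual $t^{k-r}E_k(f)_{w,p}$ term (using $k>\alpha$ and the saturation of the modulus for large $t$) is the one detail the paper leaves implicit, and you supply it correctly.
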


\begin{bibsection}
\begin{biblist}


\bib{ach}{book}{
   author={Akhiezer, N. I.},
   title={Lekcii po Teorii Approksimacii  (Lectures on the Theory of Approximation)},
   language={Russian},
   publisher={OGIZ, Moscow-Leningrad},
   date={1947},
   pages={323},
}

\bib{ber}{article}{
   author={Bernstein, S. N.},
   title={On properties of homogeneous functional classes},
   language={Russian},
   journal={Doklady Akad. Nauk SSSR (N. S.)},
   volume={57},
   date={1947},
   pages={111--114},
}

\bib{bern}{article}{
   author={Bernstein, S. N.},
   title={ Sur l'ordre de la meilleure approximation des fonctions continues par les polynomes de degr\'{e} donn\'{e}},
   journal={Mem. Cl. Sci. Acad. Roy. Belg},
   volume={4},
   date={1912},
   pages={1--103},
}

\bib{dd}{article}{
   author={Dai, F.},
   author={Ditzian, Z.},
   title={Littlewood-Paley theory and a sharp Marchaud inequality},
   journal={Acta Sci. Math. (Szeged)},
   volume={71},
   date={2005},
   number={1-2},
   pages={65--90},
   issn={0001-6969},
}

\bib{ddt}{article}{
   author={Dai, F.},
   author={Ditzian, Z.},
   author={Tikhonov, S.},
   title={Sharp Jackson inequalities},
   journal={J. Approx. Theory},
   volume={151},
   date={2008},
   number={1},
   pages={86--112},
   issn={0021-9045},
}

\bib{dt}{book}{
  author={Ditzian, Z.},
  author={Totik, V.},
  title={Moduli of smoothness},
  series={Springer Series in Computational Mathematics},
  volume={9},
  publisher={Springer-Verlag},
  place={New York},
  date={1987},
  pages={x+227},
  isbn={0-387-96536-X},
}

\bib{jack-1930}{book}{
   author={Jackson, D.},
   title={The Theory of Approximation},
   series={American Mathematical Society Colloquium Publications},
   volume={XI},
   publisher={American Mathematical Society},
   place={New York},
   date={1930},
   pages={v+178},
}

\bib{jack-1921}{article}{
   author={Jackson, D.},
   title={The general theory of approximation by polynomials and
   trigonometric sums},
   journal={Bull. Amer. Math. Soc.},
   volume={27},
   date={1921},
   number={9-10},
   pages={415--431},
}

\bib{jack}{book}{
   author={Jackson, D.},
   title={\"{U}ber die Genauigkeit der Ann\"{a}herung stetiger Funktionen durch ganze rationale Funktionen gegebenen Grades und trigonometrische Summen gegebener Ordnung (On the precision of the approximation of continuous functions by polynomials of given degree and by trigonometric sums of given order)},
   note={Gekr\"{o}nte Preisschrift und Inaugural-Dissertation},
   publisher={G\"{o}ttingen University},
   date={June 14, 1911},
   pages={98},
}

\bib{kls}{article}{
author={Kopotun, K. A.},
author={Leviatan, D.},
author={Shevchuk, I. A.},
title={New moduli of smoothness: Weighted DT moduli revisited and applied},
journal={Constr. Approx. },
date={submitted}
 }

\bib{kls-2010}{article}{
   author={Kopotun, K. A.},
   author={Leviatan, D.},
   author={Shevchuk, I. A.},
   title={Are the degrees of the best (co)convex and unconstrained
   polynomial approximations the same? II},
   journal={Ukrainian Math. J.},
   volume={62},
   date={2010},
   number={3},
   pages={420--440},
}

\bib{ky}{article}{
   author={Ky, N. X.},
   title={On approximation of functions by polynomials with weight},
   journal={Acta Math. Hungar.},
   volume={59},
   date={1992},
   number={1-2},
   pages={49--58},
}

\bib{lr}{article}{
   author={Luther, U.},
   author={Russo, M. G.},
   title={Boundedness of the Hilbert transformation in some weighted Besov type spaces},
   journal={Integral Equations Operator Theory},
   volume={36},
   date={2000},
   number={2},
   pages={220--240},
   issn={0378-620X},
}

\bib{salem}{book}{
   author={Salem, R.},
   title={Essais sur les s\'eries trigonom\'etriques},
   language={French},
   note={Th\`ese present\'ee \`a la Facult\'e des Sciences de l'Universit\'e
   de Paris},
   publisher={Hermann et Cie., Paris},
   date={1940},
   pages={87},
}

\bib{ste}{article}{
   author={Stechkin, S. B.},
   title={On the order of the best approximations of continuous functions},
   language={Russian},
   journal={Izvestiya Akad. Nauk SSSR. Ser. Mat.},
   volume={15},
   date={1951},
   pages={219--242},
   issn={0373-2436},
}

\bib{ti-book}{book}{
   author={Timan, A. F.},
   title={Theory of approximation of functions of a real variable},
   note={Translated from the Russian by J. Berry;
   Translation edited and with a preface by J. Cossar;
   Reprint of the 1963 English translation},
   publisher={Dover Publications Inc.},
   place={New York},
   date={1994},
   pages={viii+631},
   isbn={0-486-67830-X},
}

\bib{ti-diss}{book}{
   author={Timan, A. F.},
   title={Issledovaniya po teorii priblizheniya funkcij (Investigations in the theory of approximation of functions)},
   note={Dissertation},
   publisher={Kharkiv, Ukraine},
   date={1951},
}

\bib{timantiman}{article}{
   author={Timan, A. F.},
   author={Timan, M. F.},
   title={Generalized modulus of continuity and best approximation in the
   mean},
   language={Russian},
   journal={Doklady Akad. Nauk SSSR (N.S.)},
   volume={71},
   date={1950},
   pages={17--20},
}

\bib{quade}{article}{
   author={Quade, E. S.},
   title={Trigonometric approximation in the mean},
   journal={Duke Math. J.},
   volume={3},
   date={1937},
   number={3},
   pages={529--543},
%
}

\bib{dlvp-1925}{book}{
   author={de la Vall{\'e}e Poussin, Ch.-J.},
   title={On the approximation of functions of a real variable  and on quasi-analytic functions},
   publisher={Rice Institute Pamphlet},
   volume={XII},
    number={2},
   date={1925},
   pages={101--172},
}

\bib{dlvp-1919}{book}{
   author={de la Vall{\'e}e Poussin, Ch.-J.},
   title={Le\c{c}ons sur l'approximation des fonctions d'une variable r\'{e}ellee},
   publisher={Gauthier-Villars},
   place={Paris},
   date={1919},
   pages={vi+152},
}

\bib{zyg}{article}{
   author={Zygmund, A.},
   title={Smooth functions},
   journal={Duke Math. J.},
   volume={12},
   date={1945},
   pages={47--76},
   issn={0012-7094},
}

\end{biblist}
\end{bibsection}

\end{document}